\documentclass{article}
\usepackage[title]{appendix}
\usepackage{graphicx}
\usepackage{amssymb}
\usepackage{graphicx,setspace}
\usepackage{psfrag}
\usepackage{amsfonts}
\expandafter\let\csname equation*\endcsname\relax
\usepackage{amsmath}
\usepackage{amssymb, amsthm}
\usepackage{mathrsfs}
\usepackage{epstopdf}
\usepackage{booktabs}
\usepackage{float}
\usepackage{cite}
\usepackage{lineno,hyperref}
\usepackage{color}
\usepackage{subfigure}
\usepackage{amsmath}
\usepackage{dsfont}
\usepackage{amssymb}
\usepackage{graphicx,color}
\usepackage{extarrows}
\usepackage{graphicx}
\usepackage{epstopdf}
\usepackage{booktabs}
\usepackage{array}
\usepackage{enumitem}
\usepackage{booktabs}
\usepackage{tabularx}
\usepackage{array}
\usepackage{ragged2e}
\usepackage{authblk}
\newcolumntype{L}[1]{>{\raggedright\arraybackslash}p{#1}}
\newcolumntype{Y}{>{\raggedright\arraybackslash}X}
\newcolumntype{C}[1]{>{\centering\arraybackslash}p{#1}}

\numberwithin{equation}{section}

\newtheorem{lemma}{Lemma}[section]
\newtheorem{remark}{Remark}[section]
\newtheorem{theorem}{Theorem}[section]
\newtheorem{proposition}{Proposition}[section]
\newtheorem{assumption}{Assumption}[section]

\title{Affine Option Pricing with Hawkes-Type Endogenous Jump Activity
}

\author[1]{Ziyang Fang }
\author[2]{Shenglan Yuan\footnote{Corresponding author: shenglanyuan@gbu.edu.cn}}

\affil[1]{\rm Business School, Southern University of Science and Technology, Shenzhen 518055, China}
\affil[2]{\rm Department of Mathematics, School of Sciences, Great Bay University, Dongguan 523000, China }

\begin{document}

\maketitle
\begin{abstract}
We develop a risk-neutral option-pricing model where the activity scale of an infinite-activity jump process is endogenously driven by the asset's own realized price jumps. Jump sizes are governed by a normalized asymmetric tempered-stable L\'evy shape, while a predictable activity scale controls the overall jump intensity and is normalized to coincide with the local jump-induced quadratic-variation rate. Endogenous feedback is introduced through the bounded excitation function \(g(y)=1-e^{-ay^2}\), so that small realized jumps excite future activity approximately in proportion to squared jump size while the total average excitation remains finite. We construct the coupled log-price and activity-state dynamics by state-dependent thinning of a Poisson random measure, prove pathwise existence and uniqueness, derive the mean-subcriticality condition, and obtain both the risk-neutral drift restriction and a sufficient true-martingale condition. The resulting two-dimensional state process admits an affine transform representation. We derive the associated generalized Riccati system and prove real-axis well-posedness with forward invariance of the relevant complex half-plane. European options are priced by a Fourier-cosine (COS) method, which requires only the real-axis transform, and are benchmarked against a damped Carr--Madan (CM) inversion. Numerical experiments illustrate the model-implied volatility surface and show how current activity shifts near-term volatility levels, while endogenous feedback affects the persistence of jump-induced skew across maturities.
\end{abstract}

\section{Introduction}

Discontinuities in asset prices are a central feature of financial return dynamics. Because of the jump-diffusion model of Merton~\cite{merton1976option}, jumps have been recognized as an essential ingredient for option pricing. They break the Black--Scholes replication argument, contribute to market incompleteness, and help generate short-maturity smiles and skews that are difficult to be reproduced with pure diffusions. A large literature has refined the cross-sectional shape of jump risk. In particular, Carr--Geman--Madan--Yor (CGMY)-type specifications of Carr, Geman, Madan, and Yor~\cite{carr2002fine} and the general tempered-stable framework of Rosi{\'n}ski~\cite{rosinski2007tempering} retain a stable-like singularity near the origin while exponentially tempering large positive and negative jumps. These models provide flexible infinite-activity return dynamics with asymmetric tails and finite moments. However, in many such specifications the overall activity of jump risk is either constant or driven by a stochastic factor that is exogenous to the realized jumps themselves.

A complementary empirical regularity is the clustering of jumps and extreme returns. Large moves tend to be followed by periods of elevated jump risk, and shocks may propagate across assets or markets. Self- and mutually exciting point processes, introduced by Hawkes~\cite{hawkes1971spectra} and connected to a branching representation by Hawkes and Oakes~\cite{hawkes1974cluster}, provide a natural framework for this phenomenon and have become important in financial modeling~\cite{bacry2015hawkes}. Two contributions are particularly relevant here. Errais, Giesecke, and Goldberg~\cite{errais2010affine} develop affine point processes for portfolio credit risk, where default events raise future default intensities and the affine structure yields tractable transform formulas. A\"{\i}t-Sahalia, Cacho-Diaz, and Laeven~\cite{ait2015modeling} model financial contagion through mutually exciting jump processes, in which jumps in one market region increase subsequent jump intensities both within and across regions.

These self-exciting models are event-based. The driving object is a finite-activity point process, and excitation is generated by the occurrence of observable events. This structure is natural for defaults, crashes, or other discrete events, but it does not directly match the infinite-activity description of returns used in tempered-stable models, where the number of jumps over a finite interval is infinite. This work bridges these two strands. We retain a normalized asymmetric tempered-stable jump-size shape, but we let its predictable activity scale be excited by realized price jumps. The feedback is not based on event counts. Instead, it is tied to realized jump variation through the bounded function
\[
g(y)=1-e^{-ay^2}.
\]
Since \(g(y)=ay^2+O(y^4)\) near the origin, small jumps increase future activity approximately in proportion to squared jump size. This is consistent with the normalization under which the activity scale equals the local jump-induced quadratic-variation rate. At the same time, boundedness of \(g\) ensures finite average excitation despite the infinite activity of the jump-size law. The resulting stability condition is therefore a first-moment subcriticality condition measuring jump-induced activity feedback relative to mean reversion, rather than a literal offspring-counting branching ratio.

The key distinction is that activity is endogenous to the asset's own realized price jumps, while the return jump component remains infinite activity. Since the objective is option pricing rather than statistical estimation, we work directly under a risk-neutral measure. This work makes five contributions. This work makes five contributions. First, we construct the coupled dynamics of the log price \(X_t\) and the activity state \(\lambda_t\) by state-dependent thinning of a Poisson random measure and prove pathwise existence, uniqueness, and finite-horizon integrability of the endogenous activity process. Second, we derive the first-moment dynamics of the activity and identify the corresponding mean-subcriticality condition, under which the activity has a finite stationary mean whenever a stationary distribution exists. Third, we obtain the risk-neutral drift restriction that makes the discounted stock price a local martingale and provide an explicit sufficient condition, based on an exponential moment of integrated activity, under which it is a true martingale. Fourth, we derive the conditional Fourier--Laplace transform of the joint state process \((X_t,\lambda_t)\). The associated generalized Riccati system is proved to be well-posed on the real Fourier axis, with the relevant complex half-plane forward-invariant. Fifth, we develop a Fourier-cosine pricing implementation, which relies only on this real-axis transform, and use a damped Carr--Madan inversion as an independent benchmark. The numerical study shows the model-implied volatility surface and decomposes the smile effects into current-activity and feedback-persistence channels.

Methodologically, the transform analysis builds on the affine-process framework of Duffie, Filipovi{\'c}, and Schachermayer~\cite{duffie2003affine} and the affine transform approach of Duffie, Pan, and Singleton~\cite{duffie2000transform}. It is closest in spirit to the affine self-exciting construction of Errais, Giesecke, and Goldberg~\cite{errais2010affine}, but differs in two important respects: the driving jump component has infinite activity, and feedback is generated by a bounded quadratic-variation-type function rather than by finite event arrivals. For this reason, the Riccati system is derived directly from the infinitesimal generator of the joint process.

The remainder of the paper is organized as follows. Section~\ref{sec:pre} introduces the normalized tempered-stable jump-size shape and the activity-scaled compensator. Section~\ref{sec:construction_transform} constructs the Hawkes-type endogenous-activity model under the risk-neutral measure and establishes well-posedness, first-moment subcriticality, the risk-neutral drift restriction, the affine transform, and true-martingale conditions. Section~\ref{sec:fourier_pricing} develops the COS and Carr--Madan pricing formulas. Section~\ref{sec:numerical} reports the numerical implementation, pricing diagnostics, and implied-volatility analysis. Section~\ref{sec:future_challenge} concludes and discusses future challenges.Proofs are collected in the appendices.

\section{Preliminaries}\label{sec:pre}
This section introduces the basic modeling framework. The construction is based on a separation between two components of jump risk. The first component is a normalized L\'evy shape, denoted by \(\bar\nu(dy;\vartheta)\), which determines the relative distribution of jump sizes. It controls tail asymmetry, tail tempering, small-jump activity, and the cross-sectional shape of jump risk. The second component is a predictable activity scale, denoted by \(\lambda_{t-}\), which determines the time variation in the overall intensity of this normalized jump law.

\subsection{Normalized Tempered-Stable L\'evy Shape}
Let
\[
\vartheta=(p,M,G,\alpha),
\]
with \(p\in(0,1)\), \(M,G>0\) and \(\alpha\in(0,2)\). We define the normalized asymmetric tempered-stable L\'evy shape by
\begin{equation}\label{eq:Levy_density}
\bar\nu(dy;\vartheta)=\left[p\frac{M^{2-\alpha}}{\Gamma(2-\alpha)}\frac{e^{-My}}{y^{1+\alpha}}\mathbf 1_{\{y>0\}}+(1-p)\frac{G^{2-\alpha}}{\Gamma(2-\alpha)}\frac{e^{-G|y|}}{|y|^{1+\alpha}}\mathbf 1_{\{y<0\}}\right]dy.
\end{equation}
This specification is a normalized version of the asymmetric tempered-stable L\'evy density used in CGMY-type return models \cite{carr2002fine}. It retains the stable-like singularity near zero while exponentially tempering large positive and negative jumps, in line with the general tempered-stable construction \cite{rosinski2007tempering}.

The following result collects the basic admissibility, variation, and moment properties of the normalized shape.
\begin{theorem}\label{thm:normalized-shape-admissibility}
For every \(\vartheta\), the measure \(\bar\nu(dy;\vartheta)\) is a valid L\'evy measure. Moreover:
\begin{enumerate}[label=(\roman*)]
\item \(\bar\nu(|y|<1)=\infty\). Hence the jump component has infinite activity.
\item The small jumps have finite path variation if and only if
\[
\alpha<1.
\]
\item For every integer \(n\ge2\), the normalized jump-size moment
\[
c_n(\vartheta):=\int_{\mathbb R_0}y^n\bar\nu(dy;\vartheta)
\]
is finite and is given by
\[
c_n(\vartheta)=\frac{\Gamma(n-\alpha)}{\Gamma(2-\alpha)}\left[pM^{2-n}+(-1)^n(1-p)G^{2-n}\right].
\]
In particular, \(c_2(\vartheta)=1\),
\[
\int_{y>0}y^2\bar\nu(dy;\vartheta)=p,
\qquad\text{and}\qquad
\int_{y<0}y^2\bar\nu(dy;\vartheta)=1-p.
\]
\end{enumerate}
\end{theorem}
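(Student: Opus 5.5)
The whole statement reduces to explicit one-dimensional integrals of the form $\int_0^\infty y^{s-1}e^{-Ky}\,dy$. So the plan is to treat the positive and negative half-lines separately and evaluate or bound each integral directly. On the negative half-line, the substitution $y\mapsto -y$ turns the density into $(1-p)\frac{G^{2-\alpha}}{\Gamma(2-\alpha)}y^{-1-\alpha}e^{-Gy}$ on $(0,\infty)$. Every computation is therefore done once for a generic pair $(K,q)$, with $K\in\{M,G\}$ and $q\in\{p,1-p\}$, and the results are added. All constants are finite and positive because $p\in(0,1)$, $M,G>0$ and $2-\alpha>0$.

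\emph{Lévy-measure property and the moment formula.} I would start with (iii), because it also yields admissibility. For an integer $n\ge 2$, the positive half contributes
\[
p\frac{M^{2-\alpha}}{\Gamma(2-\alpha)}\int_0^\infty y^{n-1-\alpha}e^{-My}\,dy=p\frac{M^{2-\alpha}}{\Gamma(2-\alpha)}\Gamma(n-\alpha)M^{\alpha-n}.
\]
This is finite since $n-\alpha>0$, and simplifies to $\frac{\Gamma(n-\alpha)}{\Gamma(2-\alpha)}pM^{2-n}$. For the negative half, writing $y^n=(-1)^n|y|^n$ gives $(-1)^n\frac{\Gamma(n-\alpha)}{\Gamma(2-\alpha)}(1-p)G^{2-n}$. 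Adding the two yields the stated formula for $c_n(\vartheta)$. Setting $n=2$ gives $p$ and $1-p$ for the two halves, hence $c_2(\vartheta)=1$.

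For the Lévy-measure condition $\int\min(1,y^2)\,\bar\nu(dy)<\infty$, I bound $\min(1,y^2)\le y^2$ and use $c_2=1$. The tail mass $\bar\nu(|y|\ge1)$ is also finite, because the density is bounded by a constant times $e^{-\min(M,G)|y|}$ there. The measure has no atom at $0$, since it is defined by a density on $\mathbb R_0$.

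\emph{Infinite activity and variation.} For (i), on $(0,1)$ I use $e^{-My}\ge e^{-M}$. This bounds $\bar\nu((0,1))$ below by $p\frac{M^{2-\alpha}e^{-M}}{\Gamma(2-\alpha)}\int_0^1 y^{-1-\alpha}\,dy=\infty$, because $\alpha>0$.

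For (ii), I use the standard criterion that a Lévy process without Gaussian part has paths of finite variation if and only if $\int_{|y|<1}|y|\,\bar\nu(dy)<\infty$. On $(0,1)$ the density is sandwiched between $e^{-M}C y^{-1-\alpha}$ and $C y^{-1-\alpha}$, and similarly on $(-1,0)$. Hence $\int_{|y|<1}|y|\,\bar\nu(dy)$ is finite if and only if $\int_0^1 y^{-\alpha}\,dy<\infty$, which holds exactly when $\alpha<1$. I would state explicitly that ``small jumps have finite path variation'' is read as this integrability condition; this is the only interpretive point.

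I expect no real obstacle, since every step is a Gamma-function identity or a comparison with a power function. The step needing most care is the sign bookkeeping for odd $n$ on the negative half-line, which is what produces the $(-1)^n$ factor.
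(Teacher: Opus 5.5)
Your proposal is correct and follows the paper's proof: the same Gamma-function evaluation of the two half-line moments (with the $(-1)^n$ coming from the substitution $z=-y$), and the same comparison of the density with $|y|^{-1-\alpha}$ near the origin for (i) and (ii). The differences are cosmetic. You deduce the L\'evy-measure property directly from $c_2(\vartheta)=1$ via $\min(1,y^2)\le y^2$, whereas the paper argues separately near zero and in the tails. You also make the paper's $\asymp$ comparisons explicit through the bounds $e^{-M}\le e^{-My}\le 1$ on $(0,1)$.
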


\begin{proof}
See Appendix~\ref{app:Levy_shape_results}.
\end{proof}

\subsection{Activity-Scaled Jump Compensator}
\label{subsec:activity_scaled_jump_compensator}
Now we introduce the activity scale under a generic probability measure. Let
\[
(\Omega,\mathcal F,\mathbb F,\cdot),
\qquad
\mathbb F=(\mathcal F_t)_{t\geq0}
\]
be a filtered probability space satisfying the usual conditions. Here ``\(\cdot\)'' denotes either the physical measure \(P\) or the risk-neutral measure \(Q\). Let
\[
\lambda^\cdot=(\lambda_t^\cdot)_{t\geq0}
\]
be a nonnegative, \(\mathbb F\)-adapted c\`adl\`ag activity process satisfying
\[
\int_0^T\lambda_{s}^\cdot\,ds<\infty
\qquad
\text{a.s. for every }T<\infty.
\]
A jump compensator is said to be activity-scaled if it admits the factorized form
\begin{equation}
\label{eq:activity_scaled_compensator_generic}
\nu_t^\cdot(dy)\,dt=\lambda_{t-}^\cdot\bar\nu(dy;\vartheta^\cdot)\,dt.
\end{equation}
By Theorem~\ref{thm:normalized-shape-admissibility}, we have
\[
\int_{\mathbb R_0}y^2\nu_t^{\cdot}(dy)=\lambda_{t-}^{\cdot}.
\]
Hence \(\lambda_{t-}^{\cdot}\) is the local jump-induced quadratic-variation rate.

\begin{remark}[Predictability convention] The predictable activity rate entering the compensator is the left-limit process \(\lambda_{t-}^\cdot\). This convention ensures that jump arrivals over \([t,t+dt)\) are governed by the information available immediately before time \(t\), rather than by the post-jump value of the activity process. Moreover, since \(\lambda^\cdot\) is c\`adl\`ag, \(\lambda_s^\cdot\) and \(\lambda_{s-}^\cdot\) differ only at jump times, and hence have the same Lebesgue integral over finite horizons. \end{remark}

The notation in this subsection is deliberately measure-generic. In models with jumps, dynamic trading in the stock and the money-market account does not in general eliminate jump risk. This point is already apparent in \cite{merton1976option}, where discontinuous stock returns break the Black--Scholes replication argument, and jump models are typically incomplete; see \cite{cont2003financial,eberlein1997range}. Hence absence of arbitrage does not determine a unique equivalent martingale measure, and we do not derive a unique \(\mathbb P\)-to-\(\mathbb Q\) transformation from first principles. Instead, the purpose of this subsection is to specify a compensator class that can be imposed under either measure. Under the physical measure, the factorization in \eqref{eq:activity_scaled_compensator_generic} may be read as a statistical decomposition of jump-size shape and jump-activity scale. Under a risk-neutral measure, the same factorized form is used as a pricing specification, possibly with different parameters \(\vartheta^Q\) and a different activity process \(\lambda^Q\), as in the CGMY literature where statistical and risk-neutral parameters are typically distinguished \cite{carr2002fine}. In the next section, we work directly under a risk-neutral measure \(\mathbb Q\), preserving the shape--activity compensator structure while imposing the martingale drift restriction separately.

\section{Risk-Neutral Model Construction and Affine Transform}\label{sec:construction_transform}
Throughout this section, all processes, compensators and expectations are understood under the risk-neutral measure \(\mathbb Q\). For notational simplicity, we suppress the superscript \(\mathbb Q\).

\begin{assumption}[Primitive driving noises]
\label{ass:primitive_noises}
The filtered probability space satisfies the usual conditions and carries a Brownian motion \(W\) and an independent Poisson random measure
\[
\mathcal N(dt,dy,d\gamma)
\]
on
\[
[0,\infty)\times\mathbb R_0\times[0,\infty),
\qquad\mathbb R_0:=\mathbb R\setminus\{0\},
\]
with intensity measure
\[
dt\,\bar\nu(dy;\vartheta)\,d\gamma .
\]
\end{assumption}

\subsection{Modeling Framework}
Let
\[
X_t=\log S_t
\]
denote the log-price process. The jump measure of \(X\) is constructed by thinning the primitive Poisson random measure \(\mathcal N\). More precisely, for a nonnegative predictable activity process \(\lambda\), define
\begin{equation}\label{eq:jump_measure}
\mu(dt,dy):=\int_0^\infty\mathbf 1_{\{\gamma\le \lambda_{t-}\}}\,\mathcal N(dt,dy,d\gamma).
\end{equation}
The pair \((\lambda,\mu)\) is not assumed to exist a priori. It is constructed as the solution of the coupled self-exciting system below.

The endogenous activity process is specified by
\begin{equation}
\label{eq:endogenous_activity}
d\lambda_t=\kappa(\bar\lambda-\lambda_t)\,dt+\eta\int_{\mathbb R_0}g(y)\,\mu(dt,dy),
\end{equation}
where
\[
\kappa>0,\qquad \bar\lambda>0,\qquad \eta\ge 0,
\]
and the initial activity level \(\lambda_0>0\) is deterministic. The excitation function is chosen as
\begin{equation}\label{eq:g}
g(y)=1-e^{-ay^2},\qquad a>0.
\end{equation}
Since
\[
0\le g(y)\le ay^2,
\]
Theorem~\ref{thm:normalized-shape-admissibility} implies
\begin{equation}\label{eq:bar_g}
\bar g:=\int_{\mathbb R_0}g(y)\,\bar\nu(dy;\vartheta)<\infty .
\end{equation}
As \(y\to 0\), we have
\begin{equation}\label{eq:origin_behavior_g}
g(y)=ay^2+O(y^4).
\end{equation}
Hence small realized jumps excite future activity approximately in proportion to squared jump size. This is consistent with the interpretation of \(\lambda_{t-}\) as the local jump-induced quadratic-variation rate.

\begin{proposition}[Well-posedness of the endogenous activity construction]\label{prop:well_posed_activity}
Under Assumption~\ref{ass:primitive_noises}, the coupled system consisting of \eqref{eq:jump_measure} and \eqref{eq:endogenous_activity} admits a pathwise unique solution \((\lambda,\mu)\), where \(\lambda\) is a nonnegative c\`adl\`ag adapted process and \(\mu\) is the associated thinned jump measure. Moreover, for every finite horizon \(T<\infty\),
\[
\int_0^T\lambda_s\,ds<\infty
\qquad\text{a.s.},\qquad\text{and}\qquad\mathbb E\!\left[\int_0^T\lambda_s\,ds\right]<\infty.
\]
\end{proposition}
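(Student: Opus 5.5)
The plan is to build the solution path by path, separating the infinite-activity small jumps, which carry finite total excitation rate, from a finite-activity part handled by interlacing. Since $g$ is bounded by $1$ and $\bar g<\infty$, the excitation term $\eta\int g(y)\,\mu(dt,dy)$ is a pure-jump increasing process with compensator $\eta\bar g\,\lambda_{t-}\,dt$. This is finite for bounded $\lambda$. The natural approach is therefore to truncate the activity variable $\gamma$ at a level $n$, solve the system below level $n$, and remove the truncation by localization.

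\textbf{Step 1 (truncated equation).} Fix $n\in\mathbb N$ and consider
\[
d\lambda^{(n)}_t=\kappa(\bar\lambda-\lambda^{(n)}_t)\,dt+\eta\int_{\mathbb R_0}\int_0^\infty g(y)\mathbf 1_{\{\gamma\le \lambda^{(n)}_{t-}\wedge n\}}\,\mathcal N(dt,dy,d\gamma).
\]
For this equation the excitation is driven by the restriction of $\mathcal N$ to $\{\gamma\le n\}$. The measure $g(y)\bar\nu(dy)\,d\gamma$ on $\mathbb R_0\times[0,n]$ has finite mass $n\bar g$.

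Because of this, I would split the $y$-integral at the level $|y|\ge \delta$, which gives finitely many atoms on $[0,T]\times\{|y|\ge\delta\}\times[0,n]$. The part $|y|<\delta$ is treated as an increasing Lebesgue–Stieltjes integral. A cleaner route is the standard Lipschitz argument of Ikeda–Watanabe for SDEs driven by Poisson random measures. The coefficient $\lambda\mapsto g(y)\mathbf 1_{\{\gamma\le\lambda\wedge n\}}$ satisfies the $L^1$-Lipschitz bound
\[
\int\!\!\int g(y)\,\bigl|\mathbf 1_{\{\gamma\le \lambda\wedge n\}}-\mathbf 1_{\{\gamma\le \lambda'\wedge n\}}\bigr|\,\bar\nu(dy)\,d\gamma\le \bar g\,|\lambda-\lambda'|.
\]
The drift is linear. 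Picard iteration in the norm $\mathbb E\sup_{s\le t}|\cdot|$ then yields existence, and a Gronwall bound on $\mathbb E|\lambda-\lambda'|$ yields pathwise uniqueness. This $L^1$ (Yamada–Watanabe type) estimate rather than an $L^2$ estimate is the key point, because the thinning indicator is only $L^1$-Lipschitz.

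Nonnegativity follows because the jumps are nonnegative. Between jumps the drift pushes toward $\bar\lambda>0$, so $\lambda^{(n)}_t\ge \lambda_0e^{-\kappa t}+\bar\lambda(1-e^{-\kappa t})>0$.

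\textbf{Step 2 (moment bound and removal of truncation).} Taking expectations, with $\mathbb E[\text{jump term}]=\eta\bar g\,\mathbb E\int_0^t(\lambda^{(n)}_{s}\wedge n)\,ds$, gives
\[
\mathbb E\lambda^{(n)}_t\le \lambda_0+\kappa\bar\lambda t+\eta\bar g\int_0^t\mathbb E\lambda^{(n)}_s\,ds,
\]
so Gronwall yields $\sup_n\sup_{t\le T}\mathbb E\lambda^{(n)}_t\le C_T$.

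Strictly this should be done with a stopping time first. By uniqueness, the solutions $\lambda^{(n)}$ and $\lambda^{(m)}$ agree up to $\tau_n=\inf\{t:\lambda^{(n)}_t\ge n\}$.

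Since $\lambda$ is increasing up to the bounded drift, $\sup_{t\le T}\lambda_t\le \lambda_0+\kappa\bar\lambda T+\eta J_T$, where $J_T$ is the total excitation. The expectation of $J_T$ is bounded uniformly in $n$. Markov's inequality then gives $\mathbb Q(\tau_n\le T)\le C_T'/n\to0$. Thus $\tau_n\uparrow\infty$, and $\lambda:=\lambda^{(n)}$ on $[0,\tau_n)$ defines a global c\`adl\`ag adapted nonnegative solution. The measure $\mu$ is then defined by \eqref{eq:jump_measure}.

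\textbf{Step 3 (conclusions).} Uniqueness of $(\lambda,\mu)$ follows from uniqueness on each $[0,\tau_n)$, since $\mu$ is a deterministic functional of $\lambda$ and $\mathcal N$. By Fatou, $\mathbb E\lambda_t\le C_T$, and Fubini gives $\mathbb E\int_0^T\lambda_s\,ds\le TC_T<\infty$, which implies a.s. finiteness.

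The main obstacle is Step 1. Pathwise uniqueness with a discontinuous thinning indicator is handled by the $L^1$ estimate above. That estimate uses only $\bar g<\infty$ from \eqref{eq:bar_g} and never requires integrability of $\bar\nu$ itself, which is infinite by Theorem~\ref{thm:normalized-shape-admissibility}(i).
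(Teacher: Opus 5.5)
Your proposal is correct. Its key estimate is the one the paper relies on: the integrated \(L^1\)-Lipschitz bound \(\int\!\int g(y)\,|\mathbf 1_{\{\gamma\le x\}}-\mathbf 1_{\{\gamma\le x'\}}|\,\bar\nu(dy;\vartheta)\,d\gamma=\bar g\,|x-x'|\), together with the linear-growth bound \(\eta\bar g\,x\).

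The paper verifies these bounds, plus the splitting \(\kappa\bar\lambda-\kappa x\) with \(\kappa x\) continuous and nondecreasing. It then imports strong existence and pathwise uniqueness from the Li--Pu theorem for jump-type equations. Finite-horizon integrability is proved afterwards: it localizes at \(\rho_n=\inf\{t:\int_0^t\lambda_s\,ds\ge n\}\), applies Gronwall uniformly in \(n\), and lets \(n\to\infty\) by monotone convergence.

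You instead prove well-posedness by hand via Picard iteration and Gronwall in \(L^1\). This has several advantages:
\begin{enumerate}[label=(\roman*)]
\item It is self-contained.
\item It makes explicit why \(L^1\) is the right metric, since the squared \(L^2\) analogue of the bound is only linear in \(|x-x'|\).
\item It makes nonnegativity explicit through the lower bound \(\lambda_t\ge\lambda_0e^{-\kappa t}+\bar\lambda(1-e^{-\kappa t})\).
\item It yields the moment bound as a by-product, with Fatou replacing monotone convergence.
\end{enumerate}

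The cost is extra machinery that existence does not need: the truncation \(\lambda\wedge n\), the consistency of the \(\lambda^{(n)}\) up to \(\tau_n\), and the Markov-inequality step. The constant \(\bar g\) in your bound is independent of \(n\) and the growth bound is global. So the Picard scheme converges directly for the untruncated equation, and \(\mathbb E\sup_{t\le T}|\lambda_t|<\infty\) comes for free. Only the uniqueness Gronwall for two arbitrary solutions needs a localization, e.g.\ at \(\inf\{t:\lambda_t\vee\lambda'_t\ge n\}\). Your interlacing remark about splitting at \(|y|\ge\delta\) is never used and can be dropped.
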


\begin{proof}
See Appendix~\ref{app:well_posed_activity}.
\end{proof}

By Proposition~\ref{prop:well_posed_activity}, the state-dependent thinning of the primitive Poisson random measure \cite{kingman1992poisson,daley2003introduction} and the endogenous activity equation generate a self-consistent pair \((\lambda,\mu)\). For the thinned jump measure \(\mu\), the predictable compensator is
\begin{equation}
\label{eq:activity_scaled_compensator}
\nu_t(dy)\,dt=\lambda_{t-}\bar\nu(dy;\vartheta)\,dt .
\end{equation}
Indeed, the primitive Poisson random measure has intensity \(dt\,\bar\nu(dy;\vartheta)d\gamma\), and the thinning rule accepts precisely those candidate jumps with \(\gamma\le\lambda_{t-}\). Integrating over the auxiliary thinning coordinate therefore gives
\[
\int_0^\infty\mathbf 1_{\{\gamma\le\lambda_{t-}\}}\,d\gamma=\lambda_{t-}.
\]
Thus, the thinning construction together with the endogenous activity equation \eqref{eq:endogenous_activity} provides a concrete realization of the activity-scaled compensator class introduced in Section~\ref{subsec:activity_scaled_jump_compensator}. In particular, once the primitive Poisson random measure and the activity equation are fixed, the factorized compensator in \eqref{eq:activity_scaled_compensator} follows from the construction rather than being imposed as an additional assumption.

We write the compensated jump measure as
\[
\widetilde\mu(dt,dy)=\mu(dt,dy)-\lambda_{t-}\bar\nu(dy;\vartheta)\,dt .
\]
Therefore, the log-price dynamics are then specified by
\begin{equation}
\label{eq:log_price}
dX_t=b_t\,dt+\sigma\,dW_t+\int_{|y|<1} y\,\widetilde\mu(dt,dy)+\int_{|y|\ge 1} y\,\mu(dt,dy).
\end{equation}

The preceding construction establishes that the endogenous activity scale is well defined on finite horizons and that the compensator in \eqref{eq:activity_scaled_compensator} is admissible. We now use this finite-horizon admissibility to derive the first-moment dynamics of \(\lambda\) and to identify the corresponding mean-subcriticality condition.

\begin{proposition}\label{prop:mean_subcriticality}
The first moment
\[
m(t):=\mathbb E[\lambda_t]
\]
satisfies
\[
m'(t)=\kappa\bar\lambda-(\kappa-\eta\bar g)m(t),
\qquad m(0)=\lambda_0.
\]
Consequently, the natural mean-subcriticality condition is
\begin{equation}\label{eq:subcriticality_condition}
\kappa>\eta\bar g.
\end{equation}
Under this condition,
\[
\lim_{t\to\infty}m(t)=\frac{\kappa\bar\lambda}{\kappa-\eta\bar g}.
\]
In particular, if a stationary distribution with finite first moment exists, its mean must be
\[
\mathbb E[\lambda_\infty]=\frac{\kappa\bar\lambda}{\kappa-\eta\bar g}.
\]
\end{proposition}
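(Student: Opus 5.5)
We will write the activity equation \eqref{eq:endogenous_activity} in integral form and split the jump integral into its compensated part and its compensator. Using the compensator \eqref{eq:activity_scaled_compensator} of the thinned measure \(\mu\), we get
\[
\lambda_t=\lambda_0+\int_0^t\kappa(\bar\lambda-\lambda_s)\,ds+\eta\bar g\int_0^t\lambda_{s-}\,ds+\eta\int_0^t\int_{\mathbb R_0}g(y)\,\widetilde\mu(ds,dy).
\]
Here \(\bar g\) is the finite constant from \eqref{eq:bar_g}. The plan is to take expectations, show the last term has zero mean, and use that \(\lambda_{s-}=\lambda_s\) for Lebesgue-a.e.\ \(s\) (as in the predictability remark).

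\textbf{Step 1: the stochastic integral is a true martingale.} The integrand \(g(y)\) is deterministic, bounded by \(1\), and satisfies \(g\le ay^2\). For the purely discontinuous integral \(M_t=\int_0^t\int g(y)\,\widetilde\mu(ds,dy)\), the standard criterion is
\[
\mathbb E\!\left[\int_0^t\int_{\mathbb R_0}|g(y)|\,\lambda_{s-}\bar\nu(dy)\,ds\right]<\infty .
\]
This makes \(M\) a martingale of integrable variation, i.e.\ both \(\int g\,d\mu\) and its compensator are integrable. The expectation above equals \(\bar g\,\mathbb E[\int_0^t\lambda_s\,ds]\), which is finite by Proposition~\ref{prop:well_posed_activity}. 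Hence \(\mathbb E[M_t]=0\).

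If one prefers not to rely on that integrability statement directly, a localization works instead. Take \(\tau_n=\inf\{t:\lambda_t\ge n\}\), apply the above to stopped processes, and pass to the limit by monotone convergence. This uses that \(\lambda\ge0\) and that \(\lambda_t\) is dominated by \(\lambda_0+\kappa\bar\lambda t+\eta\int_0^t\int g\,d\mu\), which is integrable. This is the only genuinely technical point, and I expect it to be the main obstacle, though it is mild given Proposition~\ref{prop:well_posed_activity}.

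\textbf{Step 2: the integral equation for the mean.} The same domination gives \(\mathbb E[\lambda_t]<\infty\) and local boundedness of \(m\). Taking expectations and using Fubini,
\[
m(t)=\lambda_0+\int_0^t\big(\kappa\bar\lambda-(\kappa-\eta\bar g)m(s)\big)\,ds .
\]
Since \(m\) is locally integrable, the right side is continuous in \(t\). Therefore \(m\) is continuous, hence \(C^1\), and the stated ODE holds with \(m(0)=\lambda_0\).

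\textbf{Step 3: solve the ODE and read off the conclusions.} Set \(\beta=\kappa-\eta\bar g\). The linear ODE solves explicitly as
\[
m(t)=\frac{\kappa\bar\lambda}{\beta}+\Big(\lambda_0-\frac{\kappa\bar\lambda}{\beta}\Big)e^{-\beta t}\quad(\beta\neq0),
\]
and \(m(t)=\lambda_0+\kappa\bar\lambda t\) when \(\beta=0\).
\begin{itemize}
\item If \(\beta>0\), then \(m(t)\to\kappa\bar\lambda/\beta\).
\item If \(\beta\le0\), then \(m\) diverges. This justifies calling \eqref{eq:subcriticality_condition} the natural condition.
\end{itemize}
For the stationary statement, suppose \(\lambda_0\) is drawn from a stationary law \(\pi\) with finite mean, independent of \(\mathcal N\). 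By the Markov property of the pathwise unique solution, the same computation, conditioned on \(\lambda_0\) and then averaged, gives the same ODE with initial value \(m(0)=\int x\,\pi(dx)\), and stationarity makes \(m\) constant. Setting \(m'=0\) gives \(\kappa\bar\lambda=\beta m\). This forces \(\beta\neq0\), and since \(m\ge 0\) and \(\kappa\bar\lambda>0\), it also forces \(\beta>0\). Hence \(\mathbb E[\lambda_\infty]=\kappa\bar\lambda/(\kappa-\eta\bar g)\).
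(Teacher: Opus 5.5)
Your proof is correct and follows essentially the same route as the paper. Both write the activity equation in integral form, use the compensator $\lambda_{s-}\bar\nu(dy;\vartheta)\,ds$ together with the finite-horizon integrability from Proposition~\ref{prop:well_posed_activity} to replace the expected excitation by $\bar g\int_0^t m(s)\,ds$, and then solve the resulting linear ODE. Your additions are valid refinements of steps the paper treats briefly: the continuity bootstrap giving $m\in C^1$, and the observation that $\kappa\bar\lambda=(\kappa-\eta\bar g)m$ with $m\ge 0$ forces $\kappa>\eta\bar g$ whenever a stationary law with finite mean exists.
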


\begin{proof}
By Proposition~\ref{prop:well_posed_activity}, the finite-horizon integrability needed for the compensation formula is satisfied. Integrating activity dynamics in \eqref{eq:endogenous_activity} and taking expectations gives
\[
m(t)=m(0)+\int_0^t\left[\kappa\bar\lambda-\kappa m(s)\right]ds+\eta\,\mathbb E\left[\int_0^t\int_{\mathbb R_0}g(y)\,\mu(ds,dy)\right].
\]
Using the compensator in \eqref{eq:activity_scaled_compensator} and the definition of \(\bar g\) in \eqref{eq:bar_g}, we obtain
\[
\mathbb E\left[\int_0^t\int_{\mathbb R_0}g(y)\,\mu(ds,dy)\right]=\bar g\int_0^t m(s)\,ds,
\]
where \(\lambda_s=\lambda_{s-}\) for Lebesgue-a.e. \(s\in[0,t]\). Then
\[
m(t)=m(0)+\int_0^t\left[\kappa\bar\lambda-(\kappa-\eta\bar g)m(s)\right]ds,
\]
and therefore
\[
m'(t)=\kappa\bar\lambda-(\kappa-\eta\bar g)m(t).
\]
Solving this linear equation yields, for \(\kappa\neq\eta\bar g\),
\begin{equation}\label{eq:first_moment_m}
m(t)=e^{-(\kappa-\eta\bar g)t}m(0)+\frac{\kappa\bar\lambda}{\kappa-\eta\bar g}\left(1-e^{-(\kappa-\eta\bar g)t}\right).
\end{equation}
Thus \(m(t)\) converges to \(\kappa\bar\lambda/(\kappa-\eta\bar g)\) when \(\kappa>\eta\bar g\). If a stationary distribution with finite first moment exists, stationarity forces \(m'(t)=0\), and the same expression gives the only possible stationary mean.
\end{proof}

The same first-moment formula applies to any nonnegative random initial activity with \(\mathbb E[\lambda_0]<\infty\), after replacing \(m(0)\) by \(\mathbb E[\lambda_0]\). The next proposition strengthens the mean-stability result under \(\kappa>\eta\bar g\) to distributional stability and identifies \(\kappa\bar\lambda/(\kappa-\eta\bar g)\) as the stationary activity mean.

\begin{proposition}[Stationarity and geometric ergodicity of the activity process]\label{prop:stationary_activity}
Suppose the standing assumptions hold and the mean-subcriticality condition in \eqref{eq:subcriticality_condition} is satisfied. Then the activity process \((\lambda_t)_{t\ge0}\) admits a unique invariant distribution \(\pi_\lambda\), supported on \([\bar\lambda,\infty)\). Moreover, there exist constants \(\mathfrak M<\infty\) and \(\mathfrak C>0\) such that, for every \(\lambda_0\ge0\),
\[
\left\|\mathcal L(\lambda_t\mid \lambda_0)-\pi_\lambda\right\|_{\mathrm{TV}}\le\mathfrak M(1+\lambda_0)e^{-\mathfrak C t},
\qquad t\ge0.
\]
Here \(\|\cdot\|_{\mathrm{TV}}\) denotes total variation distance. The invariant distribution has finite first moment and
\[
\int_0^\infty \lambda\,\pi_\lambda(d\lambda)=\frac{\kappa\bar\lambda}{\kappa-\eta\bar g}.
\]
This coincides with the limiting value of the finite-horizon mean \(m(t)=\mathbb E[\lambda_t\mid\lambda_0]\) in \eqref{eq:first_moment_m}.
\end{proposition}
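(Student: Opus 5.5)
The plan is to treat \(\lambda\) as a time-homogeneous Markov process, namely a piecewise-deterministic Markov process. Between accepted jumps it relaxes exponentially toward \(\bar\lambda\), and it jumps upward by \(\eta g(y)\) at rate \(\lambda_{t-}\bar\nu(dy;\vartheta)\). We then apply the Meyn--Tweedie/Down--Meyn--Tweedie continuous-time Foster--Lyapunov theory with the linear function \(V(\lambda)=1+\lambda\).

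Markov property and generator. Strong Markovianity follows from the pathwise construction in Proposition~\ref{prop:well_posed_activity}: the solution is a measurable functional of the initial state and the Poisson random measure \(\mathcal N\), which has independent increments. The extended generator is
\[
\mathcal A f(\lambda)=\kappa(\bar\lambda-\lambda)f'(\lambda)+\lambda\int_{\mathbb R_0}\bigl[f(\lambda+\eta g(y))-f(\lambda)\bigr]\bar\nu(dy;\vartheta).
\]
Because \(\bar g<\infty\), the integral term is well defined for Lipschitz \(f\), even though jumps have infinite activity. Taking \(V(\lambda)=1+\lambda\) gives
\[
\mathcal A V(\lambda)=\kappa\bar\lambda-(\kappa-\eta\bar g)\lambda\le -c\,V(\lambda)+d,
\]
with \(c=\kappa-\eta\bar g>0\) and \(d=\kappa\bar\lambda+c\). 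This drift condition is the continuous-time analogue of the first-moment equation in Proposition~\ref{prop:mean_subcriticality}. Localization by exit times of \([0,n]\) is justified by the integrability in Proposition~\ref{prop:well_posed_activity}.

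Support. Jumps are nonnegative and the flow moves toward \(\bar\lambda\). Hence from any \(\lambda_0\), the distance \(\lambda_t-\bar\lambda\) is bounded below by \((\lambda_0-\bar\lambda)e^{-\kappa t}\). Consequently the set \([\bar\lambda,\infty)\) is absorbing, and it is entered asymptotically. Any invariant law must therefore be carried by \([\bar\lambda,\infty)\). A fine point is that starting below \(\bar\lambda\) one only approaches the set; this is harmless for the support claim.

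Minorization (main obstacle). We need every compact set \(C=[0,R]\) to be petite, together with aperiodicity (\(\psi\)-irreducibility), for the skeleton chain. We would try to show this by coupling through a common jump.

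1. First, the probability of no jump of size \(\eta g(y)\ge\delta\) during \([0,t]\) is bounded below uniformly on \(C\), because the rate \(\lambda\,\bar\nu(\{g\ge\delta/\eta\})\) is bounded on the relevant range.
2. The small jumps with \(g<\delta/\eta\) only perturb the path. Their total contribution has expectation at most \(\eta t\sup\lambda\,\int_{g<\delta/\eta} g\,d\bar\nu\), which is small, so trajectories from different starting points in \(C\) contract deterministically toward \(\bar\lambda\) up to small noise.
3. The difficulty is that the deterministic flow alone gives no absolute continuity. The fix is to use one large jump whose size \(\eta g(y)\) has a density: since \(g\) is smooth and \(\bar\nu\) has a density, \(g(Y)\) has a density on \((0,1)\) away from \(0\).
4. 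Combining the random time of that jump (an exponential clock) with the random size yields a minorizing measure proportional to Lebesgue measure on a small interval above \(\bar\lambda\). This is a Doeblin-type lower bound \(P_t(\lambda,\cdot)\ge\epsilon\,\varphi(\cdot)\) valid for all \(\lambda\in C\). Handling the infinite small-jump activity uniformly in this step is the delicate part.

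Given the drift and minorization, the continuous-time theorem (Down--Meyn--Tweedie, Theorem 5.2, or Hairer--Mattingly's Harris theorem applied to a skeleton \(P_h\), interpolating between skeleton times via \(\mathbb E[V(\lambda_t)]\le V(\lambda_0)+d/c\)) gives two conclusions: a unique invariant \(\pi_\lambda\) with \(\pi_\lambda(V)<\infty\), and \(V\)-uniform ergodicity \(\|P_t(\lambda_0,\cdot)-\pi_\lambda\|_{V}\le \mathfrak M V(\lambda_0)e^{-\mathfrak Ct}\). Since total variation is dominated by the \(V\)-norm, the stated bound with \((1+\lambda_0)\) follows.

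Finally, \(\pi_\lambda(V)<\infty\) gives a finite first moment. Starting from \(\lambda_0\sim\pi_\lambda\), which is allowed by the remark following Proposition~\ref{prop:mean_subcriticality} for random initial states with finite mean, the mean \(m\) is constant. Hence \(m'=0\), which forces \(\int\lambda\,d\pi_\lambda=\kappa\bar\lambda/(\kappa-\eta\bar g)\). The same value is obtained by letting \(t\to\infty\) in \eqref{eq:first_moment_m}.
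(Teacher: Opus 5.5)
Your proposal is correct and follows essentially the same route as the paper's proof. Both use the linear Lyapunov function \(1+\lambda\) with geometric drift rate \(\kappa-\eta\bar g\), the variation-of-constants lower barrier \(\lambda_t\ge\bar\lambda+(\lambda_0-\bar\lambda)e^{-\kappa t}\) for the support, irreducibility and petiteness of compacts from the absolutely continuous law of \(\eta g(Y)\) combined with the deterministic flow, Down--Meyn--Tweedie for \(V\)-uniform (hence total-variation) ergodicity, and the identification of the stationary mean from the first-moment equation started at \(\pi_\lambda\).

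The minorization step you flag as delicate is only asserted in a single sentence in the paper. Your sketch of it (one large jump whose size has a density, with small jumps controlled in mean) is therefore more explicit than the published argument, though it is still a sketch rather than a complete proof.
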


\begin{proof}
See Appendix~\ref{app:proof_stationarity_geometric_ergodicity}.
\end{proof}

When \(\kappa\le \eta\bar g\), the first-moment equation does not converge to a finite positive level: the mean grows linearly at equality and diverges when \(\kappa<\eta\bar g\). Thus the stability condition can equivalently be written as
\[
\mathcal B:=\frac{\eta\bar g}{\kappa}<1.
\]
We interpret \(\mathcal B\) as a model-specific analogue of the branching ratio in linear Hawkes models, where stationarity requires the integrated excitation to be below one \cite{hawkes1971spectra,hawkes1974cluster,bacry2015hawkes}. In the present infinite-activity setting, this interpretation is structural rather than genealogical: \(\mathcal B\) measures average jump-induced activity feedback relative to mean reversion, not a literal mean number of offspring events. Accordingly, the bounded excitation kernel \(g\) in \eqref{eq:g} should be read as a reduced-form response to realized jump variation, rather than as a separate micro-foundation. Throughout the sequel, we impose \(\mathcal B<1\), or equivalently \(\kappa>\eta\bar g\).

\subsection{Risk-Neutral Drift Restriction}
Let \(r\) denote the risk-free rate. For risk-neutral pricing, we impose the positive-tail condition \(M>1\), which ensures the finiteness of the positive exponential moment of the tempered-stable positive tail. This is consistent with the standard exponential-moment criterion for L\'evy processes \cite[Proposition~3.14]{cont2003financial}; in tempered-stable stock models, the moment domain is determined by the tempering parameters \cite{kuchler2014exponential}.

\begin{lemma}\label{lem:jump_martingale_correction_finite}
Suppose that \(M>1\). Then
\begin{equation}\label{eq:chi_J}
\chi_J := \int_{\mathbb R_0} \left( e^y-1-y\mathbf 1_{\{|y|<1\}} \right) \bar\nu(dy;\vartheta)
\end{equation}
is finite.
\end{lemma}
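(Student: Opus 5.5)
We split the integral defining \(\chi_J\) in \eqref{eq:chi_J} into the small-jump region \(\{|y|<1\}\) and the large-jump region \(\{|y|\ge1\}\). We then show that the integrand is nonnegative or absolutely integrable on each piece against \(\bar\nu(dy;\vartheta)\) from \eqref{eq:Levy_density}. Since the integrand \(e^y-1-y\mathbf 1_{\{|y|<1\}}\) is measurable and we will bound its absolute value, finiteness of both pieces gives finiteness of \(\chi_J\).

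On \(\{|y|<1\}\) we use Taylor's theorem with Lagrange remainder: \(|e^y-1-y|\le \tfrac12 e^{|y|}y^2\le \tfrac e2\,y^2\). Hence
\[
\int_{|y|<1}|e^y-1-y|\,\bar\nu(dy;\vartheta)\le \frac e2\int_{\mathbb R_0}y^2\,\bar\nu(dy;\vartheta)=\frac e2\,c_2(\vartheta)=\frac e2,
\]
by Theorem~\ref{thm:normalized-shape-admissibility}(iii). This step needs only the Lévy-measure property and no condition on \(M\).

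On \(\{|y|\ge1\}\) the integrand is \(e^y-1\), and we treat the two tails separately. For \(y\le -1\) we have \(|e^y-1|\le 1\). The negative tail then contributes at most \(\bar\nu(y\le-1)\), which is finite: it is bounded by \(\int_{y<0}y^2\bar\nu(dy;\vartheta)=1-p\), again by Theorem~\ref{thm:normalized-shape-admissibility}(iii). For \(y\ge1\), \(|e^y-1|\le e^y\), so the contribution is at most
\[
p\frac{M^{2-\alpha}}{\Gamma(2-\alpha)}\int_1^\infty \frac{e^{-(M-1)y}}{y^{1+\alpha}}\,dy\le p\frac{M^{2-\alpha}}{\Gamma(2-\alpha)}\int_1^\infty e^{-(M-1)y}\,dy=p\frac{M^{2-\alpha}}{\Gamma(2-\alpha)}\,\frac{e^{-(M-1)}}{M-1}<\infty .
\]
This is finite precisely because \(M>1\).

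The only point where the hypothesis enters is this positive-tail estimate, so it is the main (though mild) obstacle. If \(M\le1\), the integrand \(e^{-(M-1)y}y^{-1-\alpha}\) fails to be integrable at \(+\infty\), which explains why the lemma requires \(M>1\). Summing the three bounds gives \(|\chi_J|<\infty\). If desired, the same splitting, combined with the substitution \(u=My\), also yields an explicit CGMY-type closed form, but that is not needed for finiteness.
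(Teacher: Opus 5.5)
Your proof is correct and follows the paper's argument essentially step for step. You use the same split at $|y|=1$, you bound the negative tail via $|e^y-1|\le 1$ and the finiteness of $\bar\nu((-\infty,-1])$, and you control the positive tail through $e^{-(M-1)y}$ under $M>1$; your explicit constants ($\tfrac e2$, $1-p$, $e^{-(M-1)}/(M-1)$) only make the paper's qualitative bounds quantitative. One small correction to your closing aside: at $M=1$ the positive-tail integrand is $y^{-1-\alpha}$, which is integrable on $[1,\infty)$ because $\alpha>0$. So $M>1$ is sufficient but not necessary, and the integral diverges only for $M<1$, not ``precisely'' when $M\le 1$.
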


\begin{proof}
We first consider the small-jump region. Since \(\bar\nu(dy;\vartheta)\) has finite second moment from Theorem~\ref{thm:normalized-shape-admissibility}, we have
\[
\int_{|y|<1} \left|e^y-1-y\right| \bar\nu(dy;\vartheta)<\infty.
\]
It remains to control the large-jump region. Using the positive part of \eqref{eq:Levy_density},
\[
\int_1^\infty e^y\bar\nu(dy;\vartheta)=p\frac{M^{2-\alpha}}{\Gamma(2-\alpha)}\int_1^\infty e^{-(M-1)y}y^{-1-\alpha}\,dy<\infty
\]
when \(M>1\). On the negative tail, since \(\bar\nu\) is a L\'evy measure by Theorem~\ref{thm:normalized-shape-admissibility},
\[
\int_{-\infty}^{-1}|e^y-1|\bar\nu(dy;\vartheta)\leq\bar\nu\left((-\infty,-1];\vartheta\right)<\infty.
\]
Hence the large-jump contribution is finite. Combining the small- and large-jump regions gives \(\chi_J<\infty\).
\end{proof}

\begin{proposition}[Risk-neutral drift restriction]
\label{prop:risk_neutral_drift_restriction}
Suppose that \(M>1\), so that \(\chi_J\) is finite. If the log-price drift in \eqref{eq:log_price} is chosen as
\begin{equation}\label{eq:risk_neutral_drift_restriction}
b_t=r-\frac12\sigma^2-\lambda_{t-}\chi_J,
\end{equation}
then the discounted stock price
\[
e^{-rt}S_t
\]
is a local martingale.
\end{proposition}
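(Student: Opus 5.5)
We apply It\^o's formula for semimartingales with jumps to \(Z_t:=e^{-rt}S_t=\exp(X_t-rt)\). By \eqref{eq:log_price}, \(X\) is a semimartingale. Its continuous martingale part is \(\sigma W\), with \(d\langle X^c\rangle_t=\sigma^2dt\). Its jumps are \(\Delta X_t=y\) at the atoms of \(\mu\). It\^o's formula then gives
\[
dZ_t=Z_{t-}\Big[(b_t-r+\tfrac12\sigma^2)\,dt+\sigma\,dW_t+\int_{|y|<1}y\,\widetilde\mu(dt,dy)+\int_{|y|\ge1}y\,\mu(dt,dy)\Big]
+Z_{t-}\int_{\mathbb R_0}\big(e^y-1-y\big)\,\mu(dt,dy).
\]
We regroup the jump terms as \(Z_{t-}\int_{\mathbb R_0}(e^y-1)\,\mu(dt,dy)-Z_{t-}\int_{|y|<1}y\,\lambda_{t-}\bar\nu(dy)\,dt\). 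We then compensate \(\int(e^y-1)\mu\) using the predictable compensator \eqref{eq:activity_scaled_compensator}. This rewrites the jump part as \(Z_{t-}\int(e^y-1)\,\widetilde\mu(dt,dy)\) plus the drift \(Z_{t-}\lambda_{t-}\int(e^y-1-y\mathbf 1_{\{|y|<1\}})\bar\nu(dy)\,dt=Z_{t-}\lambda_{t-}\chi_J\,dt\).

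With this regrouping,
\[
dZ_t=Z_{t-}\big(b_t-r+\tfrac12\sigma^2+\lambda_{t-}\chi_J\big)dt+Z_{t-}\sigma\,dW_t+Z_{t-}\int_{\mathbb R_0}(e^y-1)\,\widetilde\mu(dt,dy).
\]
Under \eqref{eq:risk_neutral_drift_restriction} the \(dt\)-term vanishes identically. It remains to show that the two stochastic integrals are local martingales. The Brownian integral is a local martingale because \(Z_{t-}\) is c\`adl\`ag, hence locally bounded.

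The main obstacle is the compensation step: \(\int(e^y-1)\,\widetilde\mu\) must be a well-defined local martingale, and the rearrangement of the jump terms must be legitimate. The small jumps \(\int_{|y|<1}y\,\mu\) do not converge absolutely when \(\alpha\ge1\) (Theorem~\ref{thm:normalized-shape-admissibility}(ii)). We therefore never split \(e^y-1-y\) into separately compensated pieces. Instead we argue as follows.

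On \(\{|y|<1\}\), we use \(|e^y-1|^2\le Cy^2\). By Theorem~\ref{thm:normalized-shape-admissibility}(iii), \(\int Z_{s-}^2\lambda_{s-}\int_{|y|<1}(e^y-1)^2\bar\nu(dy)\,ds\) is locally integrable. This uses localization by stopping times \(\tau_n\) at which \(Z_{t-}\le n\) and \(\lambda_{t-}\le n\), together with Proposition~\ref{prop:well_posed_activity}. Hence that piece is a locally square-integrable martingale.

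On \(\{|y|\ge1\}\), Lemma~\ref{lem:jump_martingale_correction_finite} gives \(\int_{|y|\ge1}|e^y-1|\bar\nu(dy)<\infty\), using \(M>1\) for the positive tail. The integrand is therefore locally integrable against the compensator, so the compensated integral is a local martingale of finite variation.

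The correction \(\int_{|y|<1}(e^y-1-y)\,\mu\) is absolutely convergent because \(|e^y-1-y|\le Cy^2\), and its compensator is the corresponding part of \(\chi_J\). This justifies the regrouping. It follows that \(Z\) is a local martingale; equivalently, \(Z=Z_0\,\mathcal E(\cdot)\) is a stochastic exponential of a local martingale.
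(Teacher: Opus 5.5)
Your proposal is correct and follows essentially the same route as the paper: you apply It\^o's formula to the exponential of the jump semimartingale \(X\), then compensate the jump measure by \(\lambda_{t-}\bar\nu(dy;\vartheta)\,dt\), so that the drift of \(e^{-rt}S_t\) becomes \(Z_{t-}(b_t-r+\frac12\sigma^2+\lambda_{t-}\chi_J)\), which vanishes under the drift restriction. Your extra justification of the compensation step (a locally square-integrable small-jump part, a finite-variation large-jump part using \(M>1\), and an absolutely convergent correction \(e^y-1-y\)) fills in details the paper leaves implicit; note only that your displayed intermediate split \(\int(e^y-1)\,\mu-\int_{|y|<1}y\,\lambda_{t-}\bar\nu(dy)\,dt\) is merely formal when \(\alpha\ge1\), and the argument you give afterwards correctly avoids relying on it.
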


\begin{proof}
Applying It\^o's formula for the jump semimartingale in \eqref{eq:log_price} gives
\[
\frac{dS_t}{S_{t-}}=\left(b_t+\frac12\sigma^2+\lambda_{t-}\int_{\mathbb R_0}\left(e^y-1-y\mathbf 1_{\{|y|<1\}}\right)\bar\nu(dy;\vartheta)\right)dt+dM_t,
\]
where \(M_t\) is a local martingale collecting the Brownian martingale part and
the compensated jump part. Hence
\[
dS_t=rS_{t-}dt+S_{t-}dM_t.
\]
Equivalently,
\[
d(e^{-rt}S_t)=e^{-rt}S_{t-}dM_t,
\]
so \(e^{-rt}S_t\) is a local martingale.
\end{proof}

\subsection{Affine Transform Representation}\label{subsec:affine_transform_representation}
We derive the conditional Fourier--Laplace transform of the joint state process \((X_t,\lambda_t)\). The log-price jumps and the activity state are both coupled through \(\mu(dt,dy)\). The predictable activity level \(\lambda_{t-}\) scales the compensator of the jump measure, while a realized jump of size \(y\) increases activity by
\[
\Delta\lambda_t=\eta g(y).
\]
This feedback from realized jumps to future activity gives the model its self-exciting structure.

The transform calculation follows the affine-transform methodology of \cite{duffie2000transform} and is consistent with the affine-process framework of \cite{duffie2003affine}. A related financial application appears in \cite{errais2010affine}, where affine point processes are used to model self-exciting event arrivals in portfolio credit risk. Our model shares the affine self-exciting structure, but differs because the driving jumps have infinite activity and the feedback is generated by the bounded quadratic-variation function \(g\). We therefore derive the Riccati system directly from the infinitesimal generator rather than invoking a finite-activity point-process transform formula.

Let
\begin{equation}\label{eq:tau}
\tau=T-t.
\end{equation}
For \(u\in\mathbb R\) and \(v\in\mathbb C\) in a domain where the transform is finite,
\begin{equation}\label{eq:exponential-affine_candidate_defi}
F(\tau,x,\ell;u,v):=\mathbb E\left[\exp\left(iuX_T+v\lambda_T\right)\,\middle|\,X_t=x,\lambda_t=\ell\right].
\end{equation}
We consider the exponential-affine candidate representation
\begin{equation}\label{eq:exponential-affine_candidate_representation}
F(\tau,x,\ell;u,v)=\exp\left(iux+\phi(\tau;u,v)+\psi(\tau;u,v)\ell\right),
\end{equation}
where
\[
\phi(0;u,v)=0,
\qquad
\psi(0;u,v)=v.
\]
For each fixed value of \(\tau\),
\[
f_\tau(x,\ell) := F(\tau,x,\ell;u,v) = \exp\left( iux+\phi(\tau;u,v)+\psi(\tau;u,v)\ell \right).
\]
The generator \(\mathcal A\) acts only on the state variables \((x,\ell)\). For a state-space test function \(h\) in the domain of the generator, the infinitesimal generator of the joint Markov process \((X_t,\lambda_t)\) is
\begin{equation}\label{eq:joint_generator}
\begin{aligned}
\mathcal A h(x,\ell)
&=\left(r-\frac12\sigma^2-\ell\chi_J\right)\partial_x h(x,\ell)+\kappa(\bar\lambda-\ell)\partial_\ell h(x,\ell)+\frac12\sigma^2\partial_{xx}h(x,\ell)
\\
&+\ell\int_{\mathbb R_0}\left[h\bigl(x+y,\ell+\eta g(y)\bigr)-h(x,\ell)-y\mathbf 1_{\{|y|<1\}}\partial_x h(x,\ell)\right]\bar\nu(dy;\vartheta).
\end{aligned}
\end{equation}

\begin{remark}[Raw and compensated activity excitation]
From \eqref{eq:bar_g} we know \(\bar g <\infty\), therefore, the activity equation \eqref{eq:endogenous_activity} can equivalently be written using the compensated jump measure:
\[
d\lambda_t=\left(\,\kappa(\bar\lambda-\lambda_t)+\eta\lambda_{t-}\bar g\,\right)dt+\eta\int_{\mathbb R_0}g(y)\,\widetilde\mu(dt,dy).
\]
Under this equivalent representation, the generator contains the drift correction \(\eta\ell\bar g\,\partial_\ell h(x,\ell)\) and the jump integral contains the additional truncation term \(-\eta g(y)\partial_\ell h(x,\ell)\). These two terms cancel after integration with \(\ell\bar\nu(dy;\vartheta)\), yielding the generator form in \eqref{eq:joint_generator}.
\end{remark}
Evaluating the generator at the exponential-affine test function \(h=f_\tau\), we obtain
\begin{equation}\label{eq:division_generator}
\frac{\mathcal A f_\tau(x,\ell)}{f_\tau(x,\ell)}=\mathcal K_0\bigl(u,\psi(\tau;u,v)\bigr)+\ell\,\mathcal K_1\bigl(u,\psi(\tau;u,v)\bigr),
\end{equation}
where for a generic complex argument \(z\),
\begin{equation}
\mathcal K_0(u,z)=iu\left(r-\frac12\sigma^2\right)\frac12\sigma^2u^2+\kappa\bar\lambda\,z
\end{equation}
and
\begin{equation}\label{eq:K_1}
\mathcal K_1(u,z)=-\kappa z-iu\chi_J+\int_{\mathbb R_0}\left[\exp\left(iuy+\eta z g(y)\right)-1-iuy\mathbf 1_{\{|y|<1\}}\right]\bar\nu(dy;\vartheta).
\end{equation}

Since the joint state process is time homogeneous, the conditional transform satisfies the backward equation
\[
\partial_\tau F(\tau,x,\ell;u,v)=\mathcal A\bigl(F(\tau,\cdot,\cdot;u,v)\bigr)(x,\ell),
\qquad F(0,x,\ell;u,v)=e^{iux+v\ell}.
\]
On the other hand, by \eqref{eq:exponential-affine_candidate_representation} we have
\begin{equation}\label{eq:division_transform}
\frac{\partial_\tau F(\tau,x,\ell;u,v)}{F(\tau,x,\ell;u,v)}=\phi'(\tau;u,v)+\ell\,\psi'(\tau;u,v).
\end{equation}
Matching \eqref{eq:division_generator} and \eqref{eq:division_transform} yields
\[
\left\{\begin{aligned}
\phi'(\tau;u,v)
&=\mathcal K_0\bigl(u,\psi(\tau;u,v)\bigr),
\\
\psi'(\tau;u,v)
&=\mathcal K_1\bigl(u,\psi(\tau;u,v)\bigr).
\end{aligned}
\right.
\]
Hence the generalized Riccati system is
\begin{equation}\label{eq:generalized_Riccati_system}
\left\{
\begin{aligned}
&\phi'(\tau;u,v)=iu\left(r-\frac12\sigma^2\right)-\frac12\sigma^2u^2+\kappa\bar\lambda\,\psi(\tau;u,v),
\\[0.5em]
&\psi'(\tau;u,v)=-\kappa\psi(\tau;u,v)-iu\chi_J
\\
&\qquad+\int_{\mathbb R_0}\Bigl[\exp\bigl(iuy+\eta\psi(\tau;u,v)g(y)\bigr)-1-iuy\mathbf 1_{\{|y|<1\}}\Bigr]\bar\nu(dy;\vartheta),\\[0.5em]
&\phi(0;u,v)=0,\qquad\psi(0;u,v)=v.
\end{aligned}
\right.
\end{equation}
The following theorem establishes real-axis well-posedness of the Riccati system and verifies the corresponding affine transform identity.

\begin{theorem}[Real-axis Riccati system and affine transform]\label{thm:real_axis_riccati}
Suppose that the conditions of Proposition~\ref{prop:risk_neutral_drift_restriction} hold, and define
\[
\mathbb C_-:=\{z\in\mathbb C:\operatorname{Re}z\le0\}.
\]
For every \(u\in\mathbb R\), \(v\in\mathbb C_-\), and finite horizon \(\bar T<\infty\), the Riccati equation
\begin{equation}\label{eq:riccati_psi}
\psi'(\tau;u,v)=\mathcal K_1\bigl(u,\psi(\tau;u,v)\bigr),
\qquad\psi(0;u,v)=v
\end{equation}
has a unique solution on \([0,\bar T]\), and this solution remains in the closed left half-plane:
\[
\psi(\tau;u,v)\in\mathbb C_-,
\qquad 0\le \tau\le \bar T.
\]
With
\[
\phi(\tau;u,v):=\int_0^\tau\mathcal K_0\bigl(u,\psi(s;u,v)\bigr),ds,
\]
the pair \((\phi,\psi)\) is well defined on every finite horizon and solves the Riccati system. Consequently, for \(0\leq t\leq T<\infty\), with \(\tau=T-t\),
\[
\mathbb E_t\left[\exp\left(iuX_T+v\lambda_T\right)\right]=\exp\left(iuX_t+\phi(\tau;u,v)+\psi(\tau;u,v)\lambda_t\right).
\]
\end{theorem}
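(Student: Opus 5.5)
The proof has three parts: the ODE for \(\psi\) on \(\mathbb C_-\), forward invariance of \(\mathbb C_-\), and a martingale verification argument.

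\textbf{Regularity of \(\mathcal K_1\) on \(\mathbb C_-\).} Fix \(u\in\mathbb R\) and show that \(z\mapsto\mathcal K_1(u,z)\) is well defined, continuous and locally Lipschitz on \(\mathbb C_-\). For \(\operatorname{Re}z\le0\) we have \(|\exp(iuy+\eta zg(y))|\le1\). Split the integrand as
\[
\bigl[e^{iuy}-1-iuy\mathbf 1_{\{|y|<1\}}\bigr]+e^{iuy}\bigl[e^{\eta zg(y)}-1\bigr].
\]
The first bracket is integrable against \(\bar\nu\) by Theorem~\ref{thm:normalized-shape-admissibility}. For the second, \(|e^{w}-1|\le|w|\) when \(\operatorname{Re}w\le0\), so it is bounded by \(\eta|z|g(y)\), which is integrable by \eqref{eq:bar_g}. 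In the same way,
\[
|e^{\eta z_1g}-e^{\eta z_2g}|\le\eta g(y)|z_1-z_2|\quad\text{for }z_1,z_2\in\mathbb C_-,
\]
so \(\mathcal K_1\) is globally Lipschitz on \(\mathbb C_-\) with constant \(\kappa+\eta\bar g\). It also has linear growth: \(|\mathcal K_1(u,z)|\le C_u+(\kappa+\eta\bar g)|z|\). Extend \(\mathcal K_1\) to all of \(\mathbb C\) by composing with the projection \(P(z)=\min(\operatorname{Re}z,0)+i\operatorname{Im}z\). Picard--Lindelöf together with Grönwall then gives a unique global solution on \([0,\bar T]\) for the extended field. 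This solution solves the original equation as long as it stays in \(\mathbb C_-\).

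\textbf{Forward invariance (main obstacle).} We must check that the extended solution never leaves \(\mathbb C_-\). Take \(z=i\beta\) on the boundary. Then
\[
\operatorname{Re}\mathcal K_1(u,i\beta)=\int_{\mathbb R_0}\bigl[\cos(uy+\eta\beta g(y))-1\bigr]\bar\nu(dy;\vartheta)\le0 .
\]
The terms \(-\kappa z\) and \(-iu\chi_J\) contribute nothing here, since \(\chi_J\) is real. This sign condition at the boundary is the key fact, and I would turn it into a quantitative statement. Set \(x(\tau)=\operatorname{Re}\psi(\tau)\). In the region \(x>0\), where the projection is active, \(\mathcal K_1(u,P\psi)\) has real part \(\le0\), apart from the \(-\kappa\) term, which is evaluated at \(P\psi\) and so also contributes zero real part. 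Hence \(x'(\tau)\le0\) whenever \(x(\tau)>0\). Since \(x(0)\le0\), a first-exit-time contradiction gives \(x\le0\) throughout. On \(\mathbb C_-\) the extended field coincides with \(\mathcal K_1\), so \(\psi\) solves \eqref{eq:riccati_psi}. Uniqueness follows from the Lipschitz bound. Finally, \(\phi\) is obtained by integrating the continuous function \(\mathcal K_0(u,\psi(s))\).

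\textbf{Verification of the transform.} Define
\[
Z_s=\exp\bigl(iuX_s+\phi(T-s)+\psi(T-s)\lambda_s\bigr),\qquad s\in[t,T].
\]
Since \(\operatorname{Re}\psi\le0\) and \(\lambda\ge0\), we get \(|Z_s|\le e^{\operatorname{Re}\phi(T-s)}\), which is bounded on \([t,T]\). Apply Itô's formula for semimartingales with the jump measure \(\mu\), whose compensator is \(\lambda_{s-}\bar\nu\,ds\) by \eqref{eq:activity_scaled_compensator}. The drift of \(Z\) equals
\[
Z_{s-}\bigl[-\phi'-\psi'\lambda_{s-}+\mathcal K_0(u,\psi)+\lambda_{s-}\mathcal K_1(u,\psi)\bigr],
\]
which vanishes by \eqref{eq:division_generator} and the Riccati system. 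The remaining terms are a Brownian integral with bounded integrand and a compensated jump integral. For the latter, the integrand \(Z_{s-}\bigl(e^{iuy+\eta\psi g(y)}-1\bigr)\) satisfies
\[
\int_t^T\!\!\int_{\mathbb R_0}\bigl|Z_{s-}\bigl(e^{iuy+\eta\psi g(y)}-1\bigr)\bigr|\,\lambda_{s-}\bar\nu(dy)\,ds\le C\int_t^T\lambda_s\,ds .
\]
To see this, bound the integrand by \(\min(2,|u||y|+\eta|\psi|g(y))\), which is \(\bar\nu\)-integrable: the region \(|y|<1\) is handled by splitting off the compensated term \(iuy\) from \(\widetilde\mu\), and the region \(|y|\ge1\) by \(\bar\nu(|y|\ge1)<\infty\). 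The right-hand side has finite expectation by Proposition~\ref{prop:well_posed_activity}, so the jump integral is a true martingale. Therefore \(Z\) is a bounded true martingale, and \(\mathbb E_t[Z_T]=Z_t\) is exactly the claimed identity, because \(\phi(0)=0\) and \(\psi(0)=v\). The Markov property of \((X,\lambda)\) is not needed.
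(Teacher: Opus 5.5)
Your proposal is correct and follows essentially the paper's proof. It uses the same Lipschitz and linear-growth bounds for \(\mathcal K_1(u,\cdot)\) on \(\mathbb C_-\), the same boundary sign \(\operatorname{Re}\mathcal K_1(u,i\beta)\le 0\) for forward invariance, and the same bounded-martingale verification; your projection extension with the first-exit argument is a tidier, self-contained substitute for the paper's appeal to Nagumo's criterion and to local existence on the closed set \(\mathbb C_-\).

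One caveat: your displayed \(L^1\) bound on the compensated jump integrand is false for \(\alpha\ge 1\), since \(\min(2,|u||y|+\cdots)\) is not \(\bar\nu\)-integrable when \(\int_{|y|<1}|y|\,\bar\nu(dy;\vartheta)=\infty\). That step is unnecessary, though: \(Z\) is a bounded local martingale and hence a true martingale, exactly as the paper argues. Alternatively, the integrand is at most \(C(1\wedge|y|)\), so its square is \(\bar\nu\)-integrable and the \(L^2\) martingale criterion applies.
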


\begin{proof}
See Appendix~\ref{app:proof_real_axis_riccati}.
\end{proof}

\begin{remark}[Stock-price moment consistency]
\label{rem:stock_price_moment_consistency}
The Riccati system in \eqref{eq:generalized_Riccati_system} also provides a useful consistency check at the formal moment \((u,v)=(-i,0)\), although this point lies outside the real Fourier axis covered by Theorem~\ref{thm:real_axis_riccati}. Under the risk-neutral drift restriction in Proposition~\ref{prop:risk_neutral_drift_restriction}, the Riccati vector field satisfies
\[
\mathcal K_1(-i,0)=0,
\qquad
\mathcal K_0(-i,0)=r.
\]
Hence the formal Riccati solution is
\[
\psi(\tau;-i,0)\equiv0,
\qquad
\phi(\tau;-i,0)=r\tau.
\]
\end{remark}

The affine transform provides a tractable representation of the real-axis conditional characteristic function. The stock-price moment and true-martingale property require separate exponential-integrability conditions discussed below.

\begin{theorem}[A sufficient condition for the true martingale]
\label{thm:true_martingale_sufficient_condition}
Suppose that Proposition~\ref{prop:risk_neutral_drift_restriction} holds. Define
\[
H_J:=\int_{\mathbb R_0}\left(y e^y-e^y+1\right)\bar\nu(dy;\vartheta).
\]
Then \(H_J<\infty\). If for every finite horizon \(T<\infty\),
\[
\mathbb E\left[\exp\left(H_J\int_0^T\lambda_s\,ds\right)\right]<\infty,
\]
then the discounted stock price \(e^{-rt}S_t\) is a true martingale on
\([0,T]\).
\end{theorem}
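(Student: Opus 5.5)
The plan is to use a Lépingle--Mémin-type criterion for stochastic exponentials of jump martingales. The quantity \(H_J\) is exactly the compensator rate in that criterion.

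\textbf{Step 1: finiteness of \(H_J\).} The integrand \(h(y)=ye^y-e^y+1\) is nonnegative, since \(h(0)=0\) and \(h'(y)=ye^y\) has the sign of \(y\). Near the origin \(h(y)=\tfrac12y^2+O(y^3)\), so the small-jump region is controlled by \(c_2(\vartheta)=1\) from Theorem~\ref{thm:normalized-shape-admissibility}. On \((-\infty,-1]\) we have \(|h(y)|\le |y|e^{y}+1\), which is bounded, and \(\bar\nu((-\infty,-1])<\infty\). On \([1,\infty)\), \(h(y)\le ye^y\), and \(\int_1^\infty ye^{-(M-1)y}y^{-1-\alpha}dy<\infty\) because \(M>1\), exactly as in Lemma~\ref{lem:jump_martingale_correction_finite}.

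\textbf{Step 2: the stochastic exponential.} From the proof of Proposition~\ref{prop:risk_neutral_drift_restriction},
\[
Z_t:=e^{-rt}S_t/S_0=\mathcal E(L)_t,\qquad L_t=\sigma W_t+\int_0^t\int_{\mathbb R_0}(e^y-1)\,\widetilde\mu(ds,dy).
\]
Here \(\Delta L=e^{y}-1>-1\), so \(Z>0\) is a local martingale and a supermartingale. It therefore suffices to show \(\mathbb E[Z_T]=1\).

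\textbf{Step 3: the Lépingle--Mémin condition.} Apply the Lépingle--Mémin (1978) criterion: if
\[
\mathbb E\exp\Bigl(\tfrac12\langle L^c\rangle_T+\int_0^T\!\!\int\bigl[(1+x)\log(1+x)-x\bigr]\,\nu_L(ds,dx)\Bigr)<\infty,
\]
then \(\mathcal E(L)\) is a uniformly integrable martingale on \([0,T]\). Substituting \(x=e^y-1\) gives \((1+x)\log(1+x)-x=ye^y-e^y+1=h(y)\). With the compensator \(\lambda_{s-}\bar\nu(dy)\,ds\) from \eqref{eq:activity_scaled_compensator}, the exponent becomes
\[
\tfrac12\sigma^2T+H_J\int_0^T\lambda_{s-}ds=\tfrac12\sigma^2T+H_J\int_0^T\lambda_s\,ds,
\]
by the predictability remark. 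The deterministic factor \(e^{\sigma^2T/2}\) is harmless, so the hypothesis gives the criterion directly.

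\textbf{Main obstacle.} The delicate point is invoking the Lépingle--Mémin theorem with a random, endogenous compensator. It should be checked that the criterion holds for general quasi-left-continuous local martingales with \(\Delta L>-1\), which is the case here because the compensator is absolutely continuous in time. If a self-contained argument is preferred instead, I would fall back on localization. Set \(\tau_n=\inf\{t:\lambda_t\ge n\}\); the stopped \(Z^{\tau_n}\) is a true martingale by Novikov/Lépingle--Mémin with bounded rate. Then show uniform integrability of \(\{Z_{T\wedge\tau_n}\}\) via the identity
\[
\mathbb E[Z\log Z]\le \mathbb E_{\tilde Q}\Bigl[\tfrac12\sigma^2T+H_J\int_0^T\lambda_s\,ds\Bigr]
\]
under the changed measures. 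I expect the bookkeeping of that step to be the hardest part; the cleanest route is to cite the criterion.
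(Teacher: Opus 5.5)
Your proposal is correct and follows essentially the same route as the paper. You bound $H_J$ by the small-jump and tail estimates of Lemma~\ref{lem:jump_martingale_correction_finite}, write $e^{-rt}S_t/S_0=\mathcal E(L)_t$ with $L_t=\sigma W_t+\int_0^t\int_{\mathbb R_0}(e^y-1)\,\widetilde\mu(ds,dy)$, and apply the L\'epingle--M\'emin criterion on $[0,T]$, whose exponent is exactly $\tfrac12\sigma^2T+H_J\int_0^T\lambda_s\,ds$.

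The ``main obstacle'' you flag is not a real one. L\'epingle--M\'emin's Th\'eor\`eme~III.1 holds for any local martingale with $\Delta L>-1$ whose jump-entropy sum has a predictable compensator, with no quasi-left-continuity or deterministic-compensator requirement. The paper cites it in exactly that form, so your localization fallback is unnecessary.
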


\begin{proof}
See Appendix~\ref{app:true_martingale_sufficient_condition}.
\end{proof}

\begin{proposition}[Integrated-activity exponential moment bound]\label{prop:integrated_activity_exponential_moment}
Let \(c\ge0\). Suppose that there exists a constant \(R>0\) such that
\[
c-\kappa R+\int_{\mathbb R_0}\left(e^{\eta R g(y)}-1\right)\bar\nu(dy;\vartheta)\le 0.
\]
Then for every finite horizon \(T<\infty\),
\[
\mathbb E\left[\exp\left(c\int_0^T\lambda_s\,ds\right)\right]<\infty.
\]
Consequently, by Theorem~\ref{thm:true_martingale_sufficient_condition}, \(e^{-rt}S_t\) is a true martingale on every finite horizon whenever the displayed condition holds with \(c=H_J\).
\end{proposition}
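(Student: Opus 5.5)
The plan is to build an exponential supermartingale of the form
\[
Z_t:=\exp\Bigl(c\int_0^t\lambda_s\,ds+R\lambda_t\Bigr),
\]
with \(R>0\) the constant from the hypothesis. Since \(R\lambda_t\ge0\), we get \(\exp(c\int_0^T\lambda_s\,ds)\le Z_T\). The statement then follows once we show \(\mathbb E[Z_T]\le Z_0=e^{R\lambda_0}<\infty\).

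\textbf{Semimartingale decomposition of \(Z\).} By It\^o's formula for the pure-jump-plus-drift process \(\lambda\) in \eqref{eq:endogenous_activity}, using that a jump of size \(y\) in \(\mu\) moves \(\lambda\) by \(\eta g(y)\), we get
\[
dZ_t=Z_{t-}\Bigl[c\lambda_{t}+R\kappa(\bar\lambda-\lambda_t)\Bigr]dt+Z_{t-}\int_{\mathbb R_0}\bigl(e^{\eta Rg(y)}-1\bigr)\mu(dt,dy).
\]
Compensating with \eqref{eq:activity_scaled_compensator}, the drift equals
\[
Z_{t-}\Bigl(R\kappa\bar\lambda+\lambda_{t-}\Bigl[c-\kappa R+\int_{\mathbb R_0}\bigl(e^{\eta Rg(y)}-1\bigr)\bar\nu(dy;\vartheta)\Bigr]\Bigr).
\]
The integral is finite because \(0\le e^{\eta Rg}-1\le e^{\eta R}\eta R\,g(y)\) (as \(g\le1\)) and \(\bar g<\infty\) by \eqref{eq:bar_g}. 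By hypothesis the bracket is \(\le0\), so the drift is bounded by \(R\kappa\bar\lambda\,Z_{t-}\).

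\textbf{Localization and Gronwall.} This is the main technical point, because we do not yet know that \(Z\) is integrable, so the compensated jump integral is only a local martingale. Let \(\tau_n:=\inf\{t:\lambda_t\ge n\}\). Up to \(\tau_n-\) the integrand is bounded, since \(\lambda_{t-}\le n\) and \(\int_0^t\lambda_s\,ds\le nt\), and the compensator integral is bounded as well. The overshoot jump at \(\tau_n\) adds at most \(\eta\), because \(g\le1\). Hence \(Z_{t\wedge\tau_n}\) is bounded and the stopped compensated integral is a true martingale. Taking expectations gives
\[
\mathbb E[Z_{t\wedge\tau_n}]\le e^{R\lambda_0}+R\kappa\bar\lambda\int_0^t\mathbb E[Z_{s\wedge\tau_n}]\,ds,
\]
and Gronwall yields \(\mathbb E[Z_{t\wedge\tau_n}]\le e^{R\lambda_0+R\kappa\bar\lambda t}\).

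\textbf{Passage to the limit.} Since \(\lambda\) is càdlàg and does not explode on finite horizons (Proposition~\ref{prop:well_posed_activity}), \(\tau_n\to\infty\) a.s. Fatou's lemma then gives
\[
\mathbb E\Bigl[\exp\Bigl(c\int_0^T\lambda_s\,ds\Bigr)\Bigr]\le\mathbb E[Z_T]\le e^{R(\lambda_0+\kappa\bar\lambda T)}<\infty.
\]
The final claim follows directly by taking \(c=H_J\), which is finite by Theorem~\ref{thm:true_martingale_sufficient_condition}, and invoking that theorem.
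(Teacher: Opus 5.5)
Your proof is correct, but it takes a different route from the paper's.

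The paper's argument runs as follows:
\begin{itemize}
\item It solves the scalar Riccati equation $B'(q)=\mathfrak h(B(q))$, $B(0)=0$, where $\mathfrak h(x)=c-\kappa x+\int_{\mathbb R_0}\bigl(e^{\eta x g(y)}-1\bigr)\,\bar\nu(dy;\vartheta)$.
\item It uses local Lipschitz continuity of $\mathfrak h$, together with $\mathfrak h(0)=c\ge0\ge\mathfrak h(R)$, to show that $[0,R]$ is forward invariant.
\item It sets $A(q)=\kappa\bar\lambda\int_0^q B$ and checks that $\exp\bigl(c\int_0^t\lambda_s\,ds+A(T-t)+B(T-t)\lambda_t\bigr)$ has identically zero drift.
\item That process is therefore a nonnegative local martingale, hence a supermartingale, which gives the bound $\exp\bigl(A(T)+B(T)\lambda_0\bigr)$.
\end{itemize}

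You instead freeze the exponent at the constant $R$. The hypothesis then enters only through the sign of the $\lambda_{t-}$-coefficient in the drift. You absorb the remaining positive drift $R\kappa\bar\lambda Z_{t-}$ by explicit localization, Gronwall, and Fatou.

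What your route buys is that it skips the ODE existence and invariance step entirely. It also makes the localization fully explicit; your bounded-overshoot remark using $g\le1$ is exactly what makes the stopped process bounded.

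The cost is a cruder bound, $\exp\bigl(R\lambda_0+R\kappa\bar\lambda T\bigr)$. Since $0\le B\le R$ implies $A(T)\le R\kappa\bar\lambda T$, the paper's bound is sharper. Because the paper's process is drift-free, its bound also coincides with the exact affine transform whenever that local martingale is a true martingale.

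You could also dispense with Gronwall by including $-R\kappa\bar\lambda t$ in your exponent. That makes your process a nonnegative local supermartingale directly, and it is precisely the constant-$B$ analogue of the paper's $A$.
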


\begin{proof}
The case \(c=0\) is trivial. For \(c>0\), we define
\begin{equation}\label{eq:mathfrak_h}
\mathfrak h(x) := c-\kappa x + \int_{\mathbb R_0} \left( e^{\eta xg(y)}-1 \right)\bar\nu(dy;\vartheta), \qquad x\ge0.
\end{equation}
For any \(R^*>0\) and \(0\le x_1,x_2\le R^*\),
\[
\left|e^{\eta x_1 g(y)}-e^{\eta x_2 g(y)}\right|\le\eta e^{\eta R^*} g(y)|x_1-x_2|.
\]
Therefore, we have
\[
|\mathfrak h(x_1)-\mathfrak h(x_2)|\le\bigl(\kappa+\eta e^{\eta R^*}\bar g\bigr)|x_1-x_2|,
\]
which means \(\mathfrak h\) is locally Lipschitz on \([0,\infty)\).

Consider the scalar Riccati equation \[ B'(q)=\mathfrak h(B(q)), \qquad B(0)=0 . \] Since \(\mathfrak h(0)=c\ge0\), the solution cannot cross below \(0\). Since the assumed condition gives \(\mathfrak h(R)\le0\), the solution cannot cross above \(R\). By the standard one-dimensional invariance argument for ordinary differential equations (ODEs) \cite[Chapter~1]{coddington1955theory}, the interval \([0,R]\) is forward invariant. Hence \[ 0\le B(q)\le R, \qquad q\ge0 . \] Define \[ A(q):=\kappa\bar\lambda\int_0^q B(\varpi)\,d\varpi, \qquad q\ge0 . \] Then \(A(q)<\infty\) for every finite \(q\). We now verify the exponential-moment bound directly, following the affine-transform construction of \cite{duffie2000transform,duffie2003affine}. For \(t\in[0,T]\), write \(q=T-t\) and set \[ \mathfrak Z_t := \exp\!\left( c\int_0^t\lambda_s\,ds + A(q) + B(q)\lambda_t \right). \] The process \((\mathfrak Z_t)_{0\le t\le T}\) is nonnegative and adapted. Applying It\^o's formula to \(\mathfrak Z_t\), and compensating the jump term by \(\lambda_{t-}\bar\nu(dy;\vartheta)\,dt\), the predictable finite-variation part has density \[ \mathfrak Z_{t-}\,\mathcal G(q,\lambda_{t-}), \] where \[ \mathcal G(q,\lambda) = \Bigg[ c - B'(q) - \kappa B(q) + \int_{\mathbb R_0} \bigl(e^{\eta B(q)g(y)}-1\bigr) \bar\nu(dy;\vartheta) \Bigg]\lambda + \bigl[ \kappa\bar\lambda B(q)-A'(q) \bigr]. \] The coefficient of \(\lambda\) is \[ \mathfrak h(B(q))-B'(q)=0, \] by the Riccati equation, and the constant term is \[ \kappa\bar\lambda B(q)-A'(q)=0, \] by the definition of \(A\). Hence \(\mathcal G\equiv0\). It remains only to note that the jump compensator is finite. Since \(0\le B(q)\le R\) and \(0\le g(y)\le1\), the inequality \(e^x-1\le xe^x\) for \(x\ge0\) gives \[ \int_{\mathbb R_0} \bigl(e^{\eta B(q)g(y)}-1\bigr) \bar\nu(dy;\vartheta) \le \eta R e^{\eta R} \int_{\mathbb R_0}g(y)\,\bar\nu(dy;\vartheta) = \eta R e^{\eta R}\bar g <\infty . \] Therefore \(\mathfrak Z\) is a nonnegative local martingale, hence a supermartingale. Since \(A(0)=B(0)=0\), \[ \mathfrak Z_T = \exp\!\left(c\int_0^T\lambda_s\,ds\right), \qquad \mathfrak Z_0 = \exp\!\bigl(A(T)+B(T)\lambda_0\bigr). \] Consequently, \[ \mathbb E\!\left[ \exp\!\left( c\int_0^T\lambda_s\,ds \right) \right] = \mathbb E[\mathfrak Z_T] \le \mathfrak Z_0 = \exp\!\bigl(A(T)+B(T)\lambda_0\bigr) <\infty .
\]
This proves the claim.
\end{proof}

The condition in Proposition~\ref{prop:integrated_activity_exponential_moment} is stronger than the mean-subcriticality condition \(\kappa>\eta\bar g\). The latter controls the first moment of the activity process and ensures that the mean equation has a finite stationary level. By contrast, Proposition~\ref{prop:integrated_activity_exponential_moment} controls a positive exponential moment of the integrated activity. Indeed, for \(\mathfrak h(x)\) in \eqref{eq:mathfrak_h}, we have
\[
\mathfrak h(x)=c-(\kappa-\eta\bar g)x+O(x^2),
\qquad x\downarrow0.
\]
Thus \(\kappa>\eta\bar g\) implies that \(\mathfrak h\) initially decreases from \(\mathfrak h(0)=c>0\), but it does not by itself guarantee the existence of \(R>0\) such that \(\mathfrak h(R)\le0\). The latter is an additional exponential-integrability requirement used to verify the true-martingale condition.

\section{Fourier-Based Option Pricing}\label{sec:fourier_pricing}
This section develops Fourier-based pricing formulas for European options under the risk-neutral endogenous-activity model. We use the COS method in \cite{fang2009novel} as the main pricing method, since it relies only on the real-axis conditional characteristic function. We also present a damped Carr--Madan inversion in \cite{carr1999option} as a benchmark, subject to additional complex-transform admissibility conditions.

Consider a European option with strike price \(K\) and maturity date \(T>t\). Under the drift restriction in Proposition~\ref{prop:risk_neutral_drift_restriction}, the discounted stock price \(e^{-rt}S_t\) is a local martingale. Under the sufficient exponential integrability condition in Theorem~\ref{thm:true_martingale_sufficient_condition}, together with the integrated-activity bound in Proposition~\ref{prop:integrated_activity_exponential_moment}, the discounted stock price is a true martingale on the option horizon \([t,T]\). Therefore, for a European payoff \(\Pi(S_T)\) satisfying the required integrability condition, its time-\(t\) price is
\begin{equation}\label{eq:European_valuation}
\mathcal V_t=e^{-r\tau}\mathbb E_t\!\left[\Pi(S_T)\right].
\end{equation}

With \(\tau=T-t\), Theorem~\ref{thm:real_axis_riccati} gives the real-axis Fourier--Laplace transform of the joint state \((X_T,\lambda_T)\). Setting \(v=0\) yields the conditional characteristic function of the terminal log-price:
\begin{align}\label{eq:characteristic_function_of_payoff}
\Phi_t(u;T)
&:= \mathbb E_t\!\left[e^{iuX_T}\right]\notag\\
&=\exp\left( iuX_t+\phi(\tau;u,0)+\psi(\tau;u,0)\lambda_t \right),
\qquad u\in\mathbb R,
\end{align}
where \((\phi,\psi)\) solve the real-axis Riccati system in \eqref{eq:generalized_Riccati_system}. Now we define the terminal log-moneyness relative to strike \(K\) by
\begin{equation}\label{eq:terminal_log-moneyness}
Z_T:=X_T-\log K=\log(S_T/K).
\end{equation}
Then the conditional characteristic function of \(Z_T\) is
\begin{equation}\label{eq:characteristic_function_of_log-moneyness}
\varphi_t(u;T,K) := \mathbb E_t\!\left[e^{iuZ_T}\right] = e^{-iu\log K}\Phi_t(u;T), \qquad u\in\mathbb R.
\end{equation}

\subsection{COS Pricing}\label{subsec:cos_pricing}
The COS method approximates the risk-neutral pricing expectation from the real-axis characteristic function of the log-moneyness variable \(Z_T\). Let \([L,U]\) be a truncation interval for \(Z_T\), and let \(N\in\mathbb N\) denote the number of cosine terms retained in the approximation. Define
\begin{equation}\label{eq:COS_frequency}
u_n:=\frac{n\pi}{U-L},
\qquad n=0,\ldots,N-1,
\end{equation}
and the payoff can be written as
\[
\Pi(S_T)=\Pi(Ke^{Z_T}).
\]
We define the payoff cosine coefficients by
\begin{equation}
V_n^{\Pi}:=\frac{2}{U-L}\int_L^U\Pi(Ke^\zeta)\cos\!\left(u_n(\zeta-L)\right)\,d\zeta.
\end{equation}
The COS approximation to the European payoff price is
\begin{align}\label{eq:cos_european_price}
\mathcal V_t^{\mathrm{COS}}
&=e^{-r\tau}\sum_{n=0}^{N-1}{}'\operatorname{Re}\left[e^{-iu_nL}\varphi_t(u_n;T,K)\right]V_n^{\Pi} \notag\\
&=e^{-r\tau}\left\{\frac12 V_0^{\Pi}+\sum_{n=1}^{N-1}\operatorname{Re}\left[e^{-iu_nL}\varphi_t(u_n;T,K)\right]V_n^{\Pi}\right\},
\end{align}
where the prime on the summation means that the \(n=0\) term is weighted by one half.

The truncation interval \([L,U]\) is selected using the cumulant-based COS rule. Let \(\mathfrak c_j^Z=\mathfrak c_j^Z(t,T;K)\) denote the \(j\)-th conditional cumulant of the log-moneyness variable \(Z_T\). Then, we set
\begin{equation}\label{eq:truncation_interval}
[L,U]=\left[\mathfrak c_1^Z-\varsigma\sqrt{\mathfrak c_2^Z+\sqrt{\mathfrak c_4^Z}},\,\mathfrak c_1^Z+\varsigma\sqrt{\mathfrak c_2^Z+\sqrt{\mathfrak c_4^Z}}\right],
\qquad \varsigma>0.
\end{equation}
This is the standard cumulant-based COS truncation rule of \cite{fang2009novel}. In numerical implementation, if \(\mathfrak c_4^Z\) is negative, we use the sign-robust variant described in Section~\ref{subsec:cos_pricing_numerics}. The required cumulants \(\mathfrak c_1^Z,\mathfrak c_2^Z,\mathfrak c_4^Z\) are computed by differentiating the affine Riccati system at the origin; the details are reported in Appendix~\ref{app:cos_cumulants}.

\subsection{Carr--Madan Inversion}\label{subsec:carr_madan_benchmark}
We also report a damped Carr--Madan inversion method for Fourier pricing. Unlike the COS formula, this representation requires the log-price transform on the shifted contour implied by the damped call-price transform in \cite[Eq.~(6)]{carr1999option}:
\[
u=\omega-i(\delta+1),
\qquad \omega\in\mathbb R,
\]
where \(\delta>0\) is the damping parameter. Since this contour lies outside the real-axis domain verified in Theorem~\ref{thm:real_axis_riccati}, we impose the following additional admissibility condition only for the Carr--Madan benchmark.

\begin{assumption}[Carr--Madan admissibility]\label{ass:carr_madan_benchmark_admissibility}
Fix a damping parameter \(\delta>0\) and an option horizon \(\tau=T-t\). Assume that the following conditions hold:
\begin{enumerate}[label=(\roman*)]
\item the positive-tail moment condition is satisfied,
\begin{equation}
\delta+1<M;
\end{equation}
\item for every \(\omega\in\mathbb R\), the complex Riccati system associated with
\[
u=\omega-i(\delta+1), \qquad v=0
\]
admits a non-explosive solution on \([0,\tau]\), so that the complex log-price transform
\[
\Phi_t\bigl(\omega-i(\delta+1);T\bigr)
\]
is finite on the damped contour.
\end{enumerate}
\end{assumption}

\begin{proposition}[Fourier representation of European call prices]\label{prop:fourier_call_pricing}
Suppose that the conditions of Assumption~\ref{ass:carr_madan_benchmark_admissibility} hold and that the damped Carr--Madan integrand is integrable over \(\mathbb R\). Then the time-\(t\) price of a European call option with strike price \(K\) and maturity date \(T\) is given by
\begin{equation}\label{eq:Carr--Madan}
C_t(K,T)=\frac{K^{-\delta}e^{-r\tau}}{\pi}\int_0^\infty\operatorname{Re}\left[K^{-i\omega}\frac{\Phi_t\bigl(\omega-i(\delta+1);T\bigr)}{\delta^2+\delta-\omega^2+i(2\delta+1)\omega}\right]d\omega.
\end{equation}
\end{proposition}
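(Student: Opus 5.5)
The plan is to follow the classical Carr--Madan argument, applied to the damped call price viewed as a function of the log-strike \(k=\log K\), with conditioning on \(\mathcal F_t\) throughout. Set \(c_t(k):=C_t(e^k,T)=e^{-r\tau}\mathbb E_t[(e^{X_T}-e^k)^+]\), which is justified by \eqref{eq:European_valuation}. Define the damped price \(\tilde c_t(k):=e^{\delta k}c_t(k)\). The first step is to show that \(\tilde c_t\in L^1(\mathbb R)\), so that its Fourier transform \(\hat c_t(\omega):=\int_{\mathbb R}e^{i\omega k}\tilde c_t(k)\,dk\) exists. As \(k\to+\infty\) I will use \((e^{X_T}-e^k)^+\le e^{(1+\delta)X_T}e^{-\delta k}\), which gives \(\tilde c_t(k)\le e^{-r\tau}e^{-\delta' k}\,\mathbb E_t[e^{(1+\delta')X_T}]\) for some \(0<\delta'<\delta\) with \(1+\delta'<M\). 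The conditional moment is finite by Assumption~\ref{ass:carr_madan_benchmark_admissibility}: part (ii) gives finiteness of \(\Phi_t(-i(1+\delta);T)\), hence of \(\mathbb E_t[e^{(1+\delta)X_T}]\). Interpolating between this and \(\mathbb E_t[e^{X_T}]\), or using the bound \(\tilde c_t\le e^{-r\tau}e^{-\delta k}\mathbb E_t[e^{(1+\delta)X_T}]\) together with monotonicity in a slightly smaller exponent, handles the tail. As \(k\to-\infty\), the bound \(c_t(k)\le e^{-r\tau}\mathbb E_t[e^{X_T}]<\infty\) and \(\delta>0\) give exponential decay.

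The second step computes \(\hat c_t\) by Fubini/Tonelli. Writing \(c_t(k)=e^{-r\tau}\mathbb E_t\int_{k}^{\infty}\) is not even needed; directly, I will compute
\[
\hat c_t(\omega)=e^{-r\tau}\mathbb E_t\!\left[\int_{-\infty}^{X_T}e^{(\delta+i\omega)k}\bigl(e^{X_T}-e^k\bigr)\,dk\right]
=e^{-r\tau}\mathbb E_t\!\left[\frac{e^{(\delta+1+i\omega)X_T}}{(\delta+i\omega)(\delta+1+i\omega)}\right].
\]
The interchange is legitimate because the absolute integrand is dominated by the same expression with \(\omega=0\), and that expression is integrable by the first step. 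Since \(e^{(\delta+1+i\omega)X_T}=e^{i(\omega-i(\delta+1))X_T}\) and \((\delta+i\omega)(\delta+1+i\omega)=\delta^2+\delta-\omega^2+i(2\delta+1)\omega\), this gives
\[
\hat c_t(\omega)=e^{-r\tau}\,\frac{\Phi_t(\omega-i(\delta+1);T)}{\delta^2+\delta-\omega^2+i(2\delta+1)\omega}.
\]
Here \(\Phi_t\) on the damped contour is exactly the finite quantity guaranteed by Assumption~\ref{ass:carr_madan_benchmark_admissibility}(ii). I will interpret it as the conditional expectation, identified with the exponential-affine formula through the non-explosive Riccati solution.

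The third step is inversion. By the hypothesis that the damped integrand is integrable, \(\hat c_t\in L^1\). Moreover \(\tilde c_t\) is continuous in \(k\), by dominated convergence. Fourier inversion therefore holds pointwise:
\[
\tilde c_t(k)=\frac1{2\pi}\int_{\mathbb R}e^{-i\omega k}\hat c_t(\omega)\,d\omega.
\]
Since \(\tilde c_t\) is real, \(\hat c_t(-\omega)=\overline{\hat c_t(\omega)}\), and the integral equals \(\frac1\pi\int_0^\infty\operatorname{Re}[e^{-i\omega k}\hat c_t(\omega)]\,d\omega\). Multiplying by \(e^{-\delta k}=K^{-\delta}\) and using \(e^{-i\omega k}=K^{-i\omega}\) yields \eqref{eq:Carr--Madan}.

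The main obstacle is the integrability and identification in the first step. One must ensure that the finiteness of the complex Riccati solution on the damped contour really corresponds to \(\mathbb E_t[e^{(1+\delta)X_T}]<\infty\). The real-axis verification of Theorem~\ref{thm:real_axis_riccati} does not cover this. My approach would be to take \(\omega=0\), where everything is real and nonnegative, and run the supermartingale argument from the proof of Proposition~\ref{prop:integrated_activity_exponential_moment} on \(\exp(\theta X_t+\phi+\psi\lambda_t)\) with \(\theta=1+\delta<M\). The non-explosive real solution then gives \(\mathbb E_t[e^{\theta X_T}]\le\exp(\theta X_t+\phi+\psi\lambda_t)<\infty\), which is all the Fubini steps require.
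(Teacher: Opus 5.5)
Your route is the paper's: damp the call price in the log-strike, compute its Fourier transform by Fubini as \(e^{-r\tau}\mathbb E_t[e^{(\delta+1+i\omega)X_T}]/((\delta+i\omega)(\delta+1+i\omega))\), read the numerator as \(\Phi_t(\omega-i(\delta+1);T)\), and invert using \(\widehat c_t(-\omega)=\overline{\widehat c_t(\omega)}\). The paper only asserts integrability ``under Assumption~\ref{ass:carr_madan_benchmark_admissibility}''. Your supermartingale argument at \(\omega=0\) genuinely adds to this: take the real Riccati solution with \(\theta=1+\delta<M\) and use the nonnegative local martingale \(\exp(\theta X_s+\phi+\psi\lambda_s)\). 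That is a correct way to extract \(\mathbb E_t[e^{(1+\delta)X_T}]<\infty\) from part (ii). Your continuity remark also makes the pointwise inversion rigorous.

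The \(k\to+\infty\) tail estimate in your first step is wrong as written. The bound \((e^{x}-e^k)^+\le e^{(1+\delta')x}e^{-\delta' k}\) controls \(c_t(k)\), not \(\tilde c_t(k)=e^{\delta k}c_t(k)\). Multiplying by \(e^{\delta k}\) leaves a factor \(e^{(\delta-\delta')k}\), which grows when \(\delta'<\delta\); with \(\delta'=\delta\) you get only boundedness. Interpolating between exponents \(1\) and \(1+\delta\) cannot repair this. A pointwise decaying bound would need a moment of order strictly above \(1+\delta\), and the assumption does not supply one.

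The fix is to drop the pointwise estimate and apply Tonelli directly. Since \(\int_{\mathbb R}e^{\delta k}(e^x-e^k)^+\,dk=e^{(1+\delta)x}/(\delta(\delta+1))\), you get
\[
\int_{\mathbb R}\tilde c_t(k)\,dk=\frac{e^{-r\tau}\,\mathbb E_t\bigl[e^{(1+\delta)X_T}\bigr]}{\delta(\delta+1)} .
\]
So \(\tilde c_t\in L^1\) is equivalent to exactly the moment your supermartingale argument delivers. This identity is also the \(\omega=0\) domination used in your second step, which then no longer depends on the flawed first step.

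The same moment bound gives
\[
|\widehat c_t(\omega)|\le \frac{e^{-r\tau}\mathbb E_t\bigl[e^{(1+\delta)X_T}\bigr]}{|(\delta+i\omega)(\delta+1+i\omega)|}=O(\omega^{-2}),
\]
so the extra hypothesis that the Carr--Madan integrand is integrable is automatically satisfied. Finally, the statement only needs \(\Phi_t\) on the contour to be the conditional expectation. Identifying it with the exponential-affine Riccati formula for \(\omega\neq0\) would require a further true-martingale argument, which neither you nor the paper supplies.
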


\begin{proof}
See Appendix~\ref{app:Carr_Madan}.
\end{proof}

The corresponding European put price can be recovered from put--call parity. Under the standing risk-neutral valuation assumptions of this section, we have
\begin{equation}\label{eq:put_price}
P_t(K,T)=C_t(K,T)-S_t+e^{-r\tau}K.
\end{equation}
Therefore, it is sufficient to state the damped Fourier representation for call options.

The damped Carr--Madan formula in \eqref{eq:Carr--Madan} is an exact pricing representation whenever the required complex transform domain is admissible. By contrast, the COS formula only uses the real-axis characteristic function verified in Theorem~\ref{thm:real_axis_riccati}, and is therefore treated as the more robust default in the numerical implementation.

\section{Numerical Implementation and Diagnostics}\label{sec:numerical}
In the numerical experiments, the pricing problem is organized around the evaluation of the conditional characteristic function at the initial date \(t=0\). The objective is not to calibrate the model to market option data, but to demonstrate the numerical implementation, verify the stability of the Fourier-based pricing procedure, and illustrate the implied-volatility channels generated by the endogenous-activity mechanism. In addition, when \(\eta=0\), the endogenous feedback channel is switched off, and log-price jumps no longer excite the activity state. The transform equations then reduce to a linear benchmark case, whose explicit solution is reported in Appendix~\ref{app:eta_zero_benchmark}.

The numerical coefficients are set as follows. The spot price and financial coefficients are
\[
S_0=100,\qquad r=0.02,\qquad \sigma=0.12.
\]
The coefficients of the normalized tempered-stable L\'evy shape are
\[
p=0.4,\qquad M=8,\qquad G=4,\qquad \alpha=0.8.
\]
And the activity process coefficients are
\[
\kappa=5,\qquad \bar\lambda=0.08,\qquad \eta=1,\qquad a=1,
\qquad \lambda_0=0.10.
\]
Moreover, strikes are expressed in current log-moneyness,
\begin{equation}\label{eq:current_log-moneyness}
k=\log(K/S_0).
\end{equation}
Although the pricing formulas are defined for arbitrary strike--maturity pairs, the numerical diagnostics are evaluated on the finite strike--maturity grid
\begin{equation}\label{eq:grid}
k\in\{-0.30,-0.25,-0.20,\ldots,0.20,0.25,0.30\},
\qquad
T\in\{1/12,\,1/4,\,1/2,\,1\}.
\end{equation}

\subsection{L\'evy-Integral Term Approximation}\label{subsec:levy_integral_approximation}
We evaluate all L\'evy integrals
\begin{equation}\label{eq:Levy_integral}
\int_{\mathbb R_0}\,(\cdot)\,\bar\nu(dy;\vartheta)
\end{equation}
using the same split quadrature protocol. The negative half-line can be mapped to the positive half-line by the change of variables \(x=-y\). For numerical evaluation, the half-line is therefore truncated at \(y_{\mathrm{cut}}\) and decomposed as
\[
(0,y_{\mathrm{cut}})=(0,1)\cup(1,y_{\mathrm{cut}}).
\]
The small-jump integrals are evaluated by Gauss--Jacobi quadrature, whereas the tail integrals are evaluated by fixed-node Gauss--Legendre quadrature. Both are standard Gaussian quadrature rules; see \cite{davis2007methods} for numerical integration methods and \cite{golub1969calculation} for the computation of Gaussian quadrature rules. In computation, we use
\[
n_{\mathrm{small}}=128,\qquad
n_{\mathrm{tail}}=128,\qquad
y_{\mathrm{cut}}=10.
\]

Under this quadrature protocol, \(\bar g\) in \eqref{eq:bar_g} is
\[
\int_{\mathbb R_0}g(y)\bar\nu(dy;\vartheta)=0.952397.
\]
Hence the feedback ratio is
\(
\eta\bar g/\kappa=0.190479<1,
\)
which satisfies the mean-subcriticality condition in \eqref{eq:subcriticality_condition}. By Proposition~\ref{prop:mean_subcriticality}, the stationary mean activity
level is
\[
\frac{\kappa\bar\lambda}{\kappa-\eta\bar g}=0.098824,
\]
close to the initial activity level \(\lambda_0=0.10\). This avoids starting the numerical exercise from an activity state that is far from its long-run mean.

Similarly, the value of \(\chi_J\) in \eqref{eq:chi_J} is
\[
\int_{\mathbb R_0}\left(e^y-1-y\mathbf 1_{\{|y|<1\}}\right)\bar\nu(dy;\vartheta)=0.470998.
\]
This value is used in the risk-neutral drift restriction and in the Riccati right-hand side. Moreover, \(\mathcal J_u(z)\) defined in Appendix~\ref{app:proof_real_axis_riccati} is another L\'evy-integral term in Riccati. Similarly, given the Fourier frequency \(u\in\mathbb R\), we obtain
\[
\mathcal J_u(z)=\mathcal J_u^{+}(z)+\mathcal J_u^{-}(z),
\]
where
\[
\mathcal J_u^+(z)=C_+\int_0^\infty\left[e^{iuy+\eta z g(y)}-1-iuy\mathbf 1_{\{y<1\}}\right]e^{-My}y^{-1-\alpha}\,dy,
\]
and
\[
\mathcal J_u^-(z)=C_-\int_0^\infty\left[e^{-iuy+\eta z g(y)}-1+iuy\mathbf 1_{\{y<1\}}\right]e^{-Gy}y^{-1-\alpha}\,dy.
\]
Here, \(C_+=pM^{2-\alpha}/\Gamma(2-\alpha)=5.282573\) and \(C_-=(1-p)G^{2-\alpha}/\Gamma(2-\alpha)=3.449060\).

We also verify the sufficient true-martingale condition used for risk-neutral valuation. The \(H_J\) in Theorem~\ref{thm:true_martingale_sufficient_condition} is
\[
\int_{\mathbb R_0}\left(ye^y-e^y+1\right)\bar\nu(dy;\vartheta)=0.472517.
\]
Taking \(R=0.15\), we have
\[
\mathfrak h(0.15)=0.472517-5\times0.15+\int_{\mathbb R_0}\left(e^{\eta(0.15)g(y)}-1\right)\bar\nu(dy;\vartheta)=-0.133812<0.
\]
Thus the sufficient condition in Proposition~\ref{prop:integrated_activity_exponential_moment} is satisfied for numerical setup, and the discounted stock price is a true martingale on the finite option horizons considered below.

\begin{remark}
Gauss--Jacobi quadrature is used on the small-jump region because, by \eqref{eq:origin_behavior_g}, the non-kernel factors in the L\'evy integrals in \eqref{eq:Levy_integral} are \(O(y^2)\) near the origin after compensation. Hence, after multiplication by the small-jump kernel \(|y|^{-1-\alpha}\), the corresponding integrands have the endpoint behavior
\[
O(y^2)|y|^{-1-\alpha}=O(|y|^{1-\alpha}).
\]
This is the endpoint weight incorporated by the Gauss--Jacobi rule. By contrast, the tail integrals on \((1,y_{\mathrm{cut}})\) are smooth on a finite interval and are evaluated by fixed-node Gauss--Legendre quadrature.
\end{remark}

As a truncation-robustness check, we vary
\[
y_{\mathrm{cut}}\in\{20,30,40,50\}.
\]
We compute the maximum absolute discrepancy relative to \(y_{\mathrm{cut}}=10\) across
\[
\bar g,\quad \chi_J,\quad H_J,\quad\int_{\mathbb R_0}\left(e^{\eta(0.15)g(y)}-1\right)\bar\nu(dy;\vartheta),\quad\mathfrak h(0.15).
\]
The largest discrepancy is \(3.886\times 10^{-15}\).

\subsection{Riccati Solver and Characteristic-Function Evaluation}\label{subsec:riccati_solver_numerics}
After fixing the L\'evy-integral quadrature, we solve the Riccati system in \eqref{eq:generalized_Riccati_system} for each maturity and each real Fourier frequency required by the COS implementation. The real Fourier axis is the relevant domain for the COS method. The complex-valued Riccati system is implemented as a real four-dimensional ODE for
\[
\left(
\operatorname{Re}\phi,
\operatorname{Im}\phi,
\operatorname{Re}\psi,
\operatorname{Im}\psi
\right).
\]
We integrate the Riccati system using the DOP853 adaptive Runge--Kutta method from \texttt{SciPy} \cite{virtanen2020scipy}. The solver tolerances are
\[
\texttt{rtol}=10^{-9},
\qquad
\texttt{atol}=10^{-11}.
\]
At each ODE step, the right-hand side \(\mathcal K_1(u,\psi)\) is evaluated using the quadrature protocol described above. Once \((\phi,\psi)\) has been obtained, the conditional characteristic function is evaluated as
\[
\Phi_0(u;T)=\exp\left(iu\log S_0+\phi(T;u,0)+\psi(T;u,0)\lambda_0\right).
\]
This quantity is the direct input into the Fourier pricing formulas.

As a solver-robustness check, we recompute the Riccati system in \eqref{eq:generalized_Riccati_system} using the implicit Radau method, commonly used for stiff ODEs \cite{wanner1996solving}, and compare it with the DOP853 solution to ensure that the computed Riccati solutions are not solver-specific. The comparison is carried out on the diagnostic grid
\begin{equation}\label{eq:u_grid}
u\in\{0,5,25,75,150\}.
\end{equation}
For each pair \((u,T)\), we obtain
\[
\max_{u,T} \left| \Phi^{\mathrm{DOP853}}_0(u;T)-\Phi^{\mathrm{Radau}}_0(u;T) \right| = 1.531\times10^{-13}.
\]
Thus the COS characteristic-function evaluations are solver-robust at the reported tolerances.

\subsection{Numerical Results of COS}\label{subsec:cos_pricing_numerics}
The COS expansion order is fixed at \(N=256\) in computation. For each maturity \(T\) and strike price \(K\), the baseline COS truncation interval follows the cumulant rule in \eqref{eq:truncation_interval}. In the numerical implementation, we use the sign-robust variant
\[
L_{0,T;K}=\mathfrak c_1^Z(0,T;K)-\varsigma\sqrt{\mathfrak c_2^Z(0,T;K)+\sqrt{|\mathfrak c_4^Z(0,T;K)|}},
\]
\[
U_{0,T;K}=\mathfrak c_1^Z(0,T;K)+\varsigma\sqrt{\mathfrak c_2^Z(0,T;K)+\sqrt{|\mathfrak c_4^Z(0,T;K)|}},
\]
with \(\varsigma=10\). By Appendix~\ref{app:cos_cumulants} and current log-moneyness \(k\) in \eqref{eq:current_log-moneyness}, the required cumulants are
\[
\mathfrak c_1^Z(0,T;K)=-k+a_1(T)+\lambda_0b_1(T),
\]
\[
\mathfrak c_2^Z(0,T;K)=a_2(T)+\lambda_0b_2(T),
\]
\[
\mathfrak c_4^Z(0,T;K)=a_4(T)+\lambda_0b_4(T).
\]
Thus the interval width is maturity-specific, while the interval center shifts with \(k\). The resulting second cumulants are positive and increase with maturity.

At the at-the-money strike \(k=0\), the corresponding truncation intervals and COS prices are reported in Table~\ref{tab:cos_basic_diagnostics}. Across the strike grid \(k\) in \eqref{eq:grid}, the computed call prices are decreasing in \(K\), as required by no-arbitrage monotonicity.

\begin{table}[htbp]
\centering
\caption{Basic COS truncation and pricing diagnostics at \(k=0\)}
\label{tab:cos_basic_diagnostics}
\renewcommand{\arraystretch}{1.18}
\setlength{\extrarowheight}{2pt}
\begin{tabular}{ccccc}
\toprule
\(T\) & \(L_{0,T;100}\) & \(U_{0,T;100}\) & \(c_2^Z(0,T;100)\) & \(C_0^{\mathrm{COS}}\) \\
\hline
\(1/12\) & \(-2.095910\) & \(2.089985\) & \(9.547241\times 10^{-3}\) & \(3.316852\) \\
\(1/4\)  & \(-3.107193\) & \(3.089483\) & \(2.870443\times 10^{-2}\) & \(6.244772\) \\
\(1/2\)  & \(-4.051654\) & \(4.016348\) & \(5.752622\times 10^{-2}\) & \(9.189160\) \\
\(1\)    & \(-5.298573\) & \(5.228174\) & \(1.152733\times 10^{-1}\) & \(13.461744\) \\
\bottomrule
\end{tabular}
\end{table}

To assess sensitivity of the truncation multiplier, we vary it over
\[
\varsigma\in\{8,9,11,12\}.
\]
The maximum absolute price deviation is
\[
\max_{k,T,\varsigma}\left|C_0^{\mathrm{COS}}(k,T;\varsigma)-C_0^{\mathrm{COS}}(k,T;10)\right|=2.352225\times 10^{-6}.
\]

\subsection{Implied-Volatility Inversion}\label{subsec:iv_inversion_numerics}
The COS prices are converted into Black--Scholes implied-volatility (BS IV) units strike by strike and maturity by maturity. For each pair \((k,T)\), the COS method yields a call price \(C_0^{\mathrm{COS}}(K,T)\). The corresponding put price is obtained from \eqref{eq:put_price},
\[
P_0^{\mathrm{COS}}(K,T)=C_0^{\mathrm{COS}}(K,T)-S_0+e^{-rT}K .
\]
For numerical stability, implied volatilities are computed from out-of-the-money (OTM) option prices:
\[
\mathcal V_0^{\mathrm{OTM}}(K,T)=
\begin{cases}
P_0^{\mathrm{COS}}(K,T), & k<0,\\[0.3em]
C_0^{\mathrm{COS}}(K,T), & k\ge 0.
\end{cases}
\]
The Black--Scholes implied volatility is then defined as the unique solution
\[
\mathcal V_0^{\mathrm{BS},\mathrm{OTM}}\left(S_0,K,r,T,\sigma_{\mathrm{IV}}(k,T)\right)=\mathcal V_0^{\mathrm{OTM}}(K,T).
\]

Applying this inversion to every point on the grid in \eqref{eq:grid} gives the model-implied volatility surface
\[
(k,T)\longmapsto \sigma_{\mathrm{IV}}(k,T).
\]
Figure~\ref{fig:model_implied_surface_3d} shows the pronounced short-maturity curvature and the gradual flattening of the implied-volatility profile as maturity increases. In addition, Figure~\ref{fig:model_implied_smiles} reports the model-implied volatility smiles, which display a pronounced short-maturity curvature. For \(T=1/12\), the implied volatilities at \(k=-0.30\), \(k=0\), and \(k=0.30\) are
\[
0.516470,\qquad 0.281031,\qquad 0.456148,
\]
respectively. The left wing is higher than the right wing, consistent with the negatively skewed jump specification generated by \(G=4<M=8\). The left--right spread
\begin{equation}\label{eq:left-right_spread}
\sigma_{\mathrm{IV}}(-0.30,T)-\sigma_{\mathrm{IV}}(0.30,T)
\end{equation}
decreases from \(0.060322\) at \(T=1/12\) to \(0.041382\) at \(T=1/4\), \(0.029886\) at \(T=1/2\), and \(0.019965\) at \(T=1\). This indicates that the short-maturity skew is the steepest and that the smile becomes flatter as maturity increases.

\begin{figure}[htbp]
\centering
\includegraphics[width=1.0\textwidth]{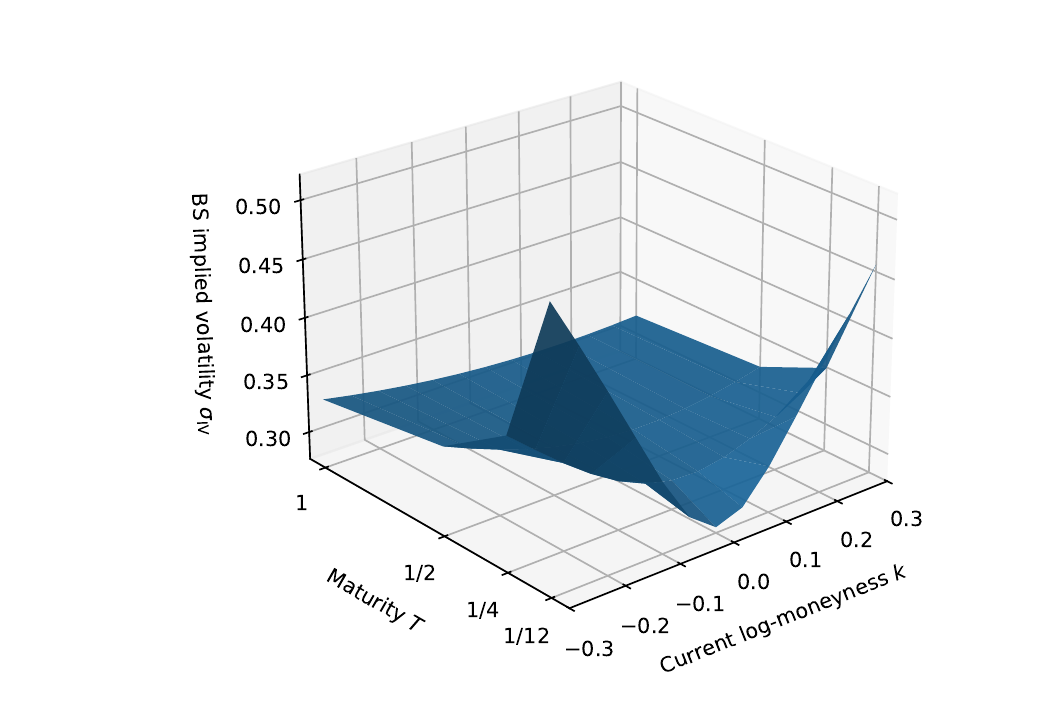}
\caption{Model-implied volatility surface.}
\label{fig:model_implied_surface_3d}
\end{figure}

\begin{figure}[htbp]
\centering
\includegraphics[width=0.80\textwidth]{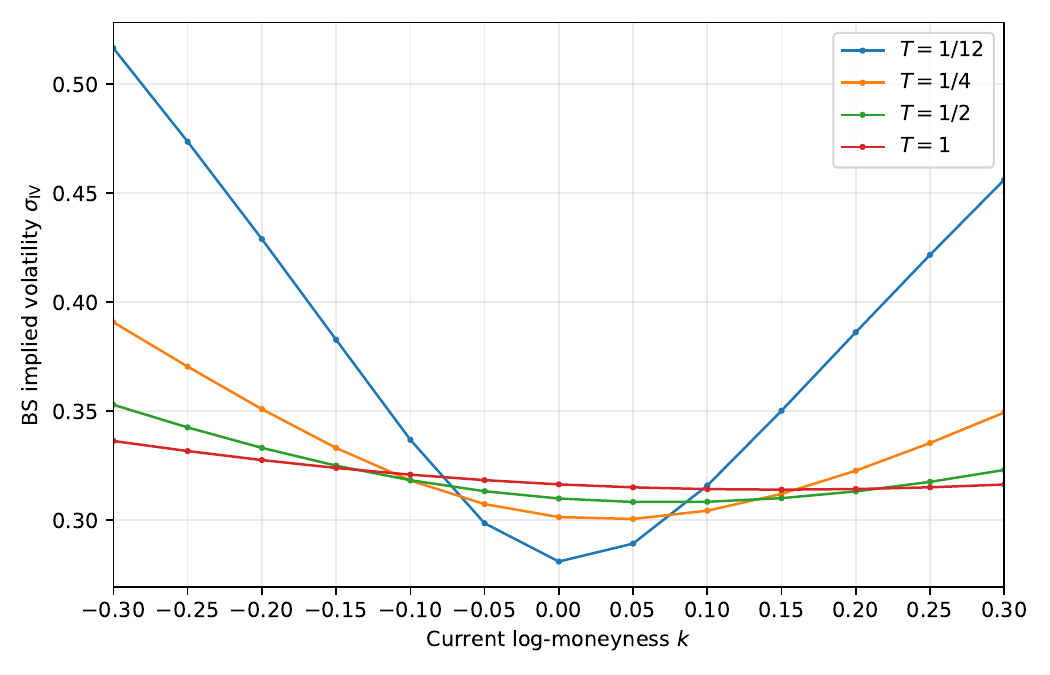}
\caption{Model-implied volatility smiles.}
\label{fig:model_implied_smiles}
\end{figure}

\subsection{Mechanism Decomposition}\label{subsec:mechanism_decomposition}
The preceding subsection shows that, in this numerical setup, the endogenous-activity model produces a pronounced short-maturity smile, a higher left wing, and a narrowing left--right spread. We now decompose these effects into two dynamic channels of the activity process, the initial activity level \(\lambda_0\) and the endogenous feedback parameter \(\eta\).

Firstly, Figure~\ref{fig:lambda0_channel_smile} illustrates the effect of \(\lambda_0\) on the implied-volatility smile on the grid in \eqref{eq:grid}. The maturity is fixed at \(T=1/12\), and all parameters except \(\lambda_0\) are fixed. Increasing \(\lambda_0\) from \(0.05\) to \(0.20\) shifts the entire smile upward. In particular, the at-the-money implied volatility increases from \(0.220060\) to \(0.383056\), while the mean implied volatility across the reported moneyness grid increases from \(0.330505\) to \(0.460399\). This supports that the current activity state mainly controls the near-term volatility level. The cross-sectional shape of the smile is largely preserved, because the jump-size distribution is unchanged across the three cases. The  left--right spread in \eqref{eq:left-right_spread} also increases moderately, from \(0.053665\) at \(\lambda_0=0.05\) to \(0.067318\) at \(\lambda_0=0.20\), reflecting the higher near-term amplification of jump risk under a larger current activity level.

Secondly, Figure~\ref{fig:eta_channel_skew_profile} examines how the \(\eta\) affects the maturity profile of the implied-volatility skew. To isolate this effect from changes in the unconditional activity level, we use a stationary-mean-matched comparison. For each value of \(\eta\), \(\bar\lambda\) is adjusted so that
\[
\frac{\kappa\bar\lambda(\eta)}{\kappa-\eta\bar g}
\]
remains fixed at its benchmark value \(0.098824\). This gives \(\bar\lambda(0)=0.098824\) and \(\bar\lambda(0.5)=0.089412\). The left--right spread declines with maturity for all values of \(\eta\), but the decline is slower when the feedback parameter is larger. This indicates that endogenous feedback mainly affects the persistence of jump-induced skew across maturities rather than merely shifting the volatility level.

\begin{remark}
At \(T=1/12\), the spread is slightly higher for \(\eta=0\) than for \(\eta=1\) (\(0.061578\) versus \(0.060322\)). This small short-maturity difference is not the main feedback effect. At very short horizons, the smile is dominated by the current activity state and the instantaneous jump-size distribution. The effect of \(\eta\) becomes clearer at medium and long maturities, where feedback has time to propagate jump-induced activity.
\end{remark}

\begin{figure}[htbp]
\centering
\includegraphics[width=0.80\textwidth]{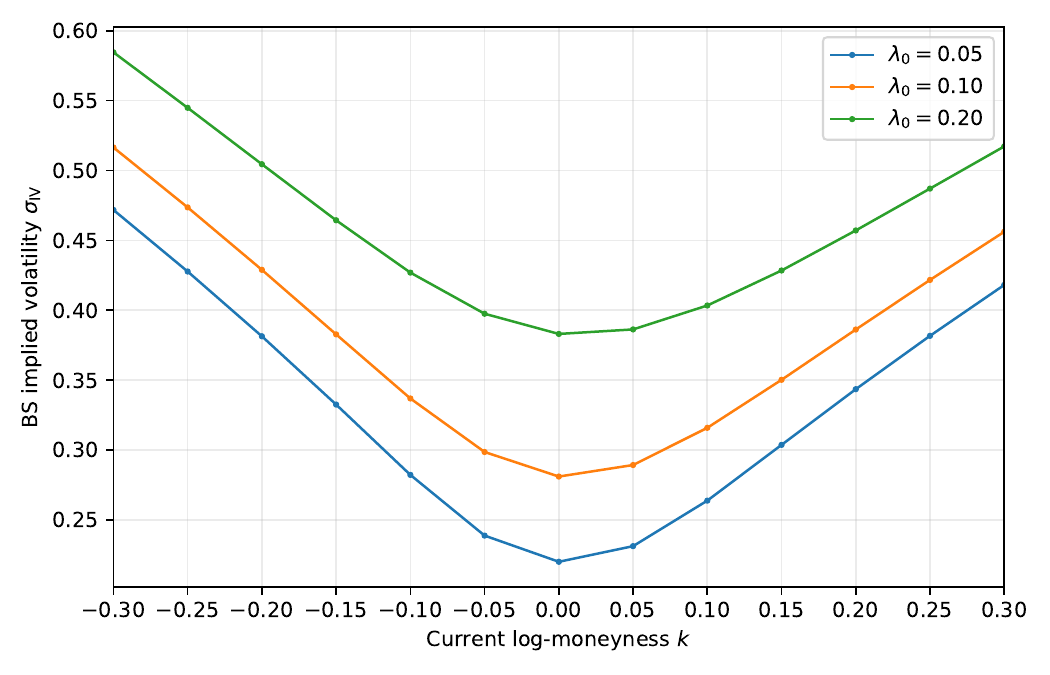}
\caption{Initial-activity channel.}
\label{fig:lambda0_channel_smile}
\end{figure}

\begin{figure}[htbp]
\centering
\includegraphics[width=0.80\textwidth]{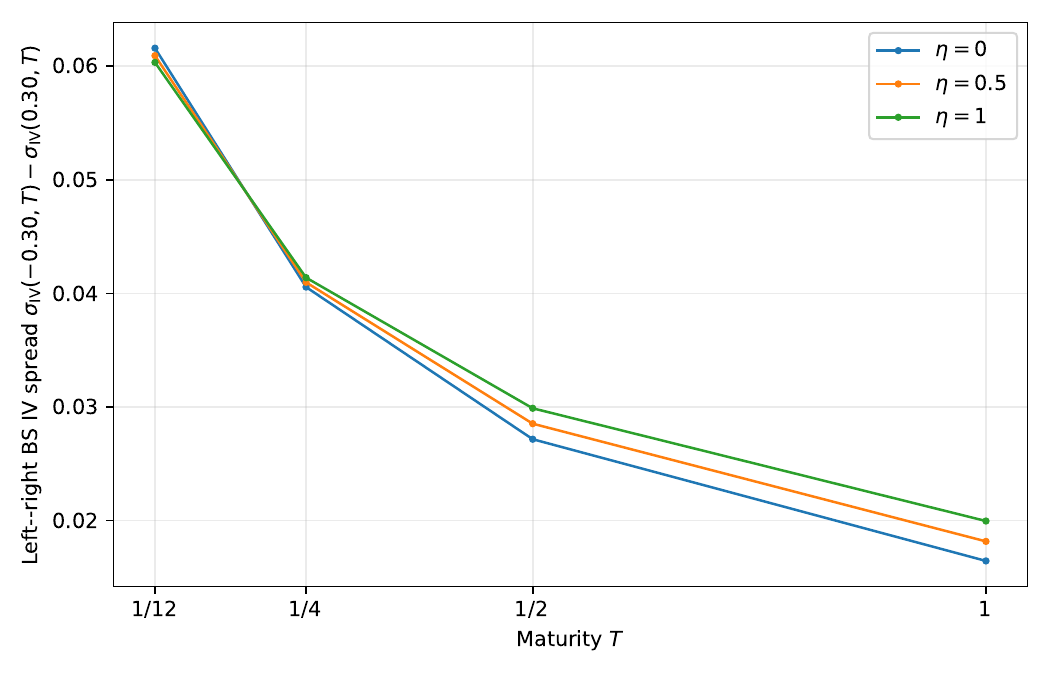}
\caption{Endogenous-feedback channel.}
\label{fig:eta_channel_skew_profile}
\end{figure}

\subsection{Carr--Madan Check}\label{subsec:carr_madan_benchmark_numerics}
We compare COS prices with prices computed from the Carr--Madan inversion formula. For the Carr--Madan calculation, we use the damping parameter \(\delta=1.5\). Under the present jump-tail specification, \(M=8\), we have
\[
\delta+1=2.5<M=8.
\]

The Carr--Madan inversion in \eqref{eq:Carr--Madan} involves an outer integral over the frequency variable \(\omega\), which is truncated to \([0,\omega_{\max}=150]\) and evaluated by Gauss--Legendre quadrature with \(512\) quadrature nodes. At each quadrature node, the complex-valued characteristic function \(\Phi_0(\omega-i(\delta+1);T)\) is obtained by solving the Riccati system. The Lévy integrals appearing inside the Riccati right-hand side are evaluated using the same Gauss--Jacobi/Gauss--Legendre quadrature protocol described above.

The maximum price discrepancy is
\[
\max_{k,T}\left|C_0^{\mathrm{CM}}(K,T)-C_0^{\mathrm{COS}}(K,T)\right|=1.304609\times10^{-7}.
\]
After Black--Scholes inversion, the maximum implied-volatility discrepancy is
\[
\max_{k,T}\left|\sigma_{\mathrm{IV}}^{\mathrm{CM}}(k,T)-\sigma_{\mathrm{IV}}^{\mathrm{COS}}(k,T)\right|=1.273328\times10^{-7}.
\]
The largest price and implied-volatility discrepancy both occur at \(T=1/12\) and \(k=0.30\).

\subsection{Monte Carlo validation}\label{subsec:mc_validation}
As an independent validation of the Riccati-based COS prices, we simulate the coupled process \((X_t,\lambda_t)\) directly. Since \(\bar\nu\) has infinite activity, we use a cutoff \(\varepsilon\): jumps with \(|y|<\varepsilon\) are replaced by a Gaussian approximation with variance rate
\[
s_\varepsilon^2=\int_{|y|<\varepsilon}y^2\,\bar\nu(dy),
\]
while their average contribution to the activity dynamics is absorbed through
\[
\bar g_{\varepsilon}=\int_{|y|<\varepsilon}g(y)\,\bar\nu(dy).
\]
The finite-activity jumps are simulated by Euler thinning. On \([t_n,t_{n+1})\), with \(t_n=n\Delta\) and simulated activity \(\lambda_n\), the number of explicit jumps is drawn from
\[
\operatorname{Poisson}\bigl(\lambda_n\bar\nu(|y|\ge\varepsilon)\Delta\bigr),
\]
and their sizes are sampled from
\[
\frac{\mathbf 1_{\{|y|\ge\varepsilon\}}\bar\nu(dy)}{\bar\nu(|y|\ge\varepsilon)}.
\]
The log-price drift is adjusted for this Gaussian replacement so that the truncated simulation preserves the risk-neutral stock-price martingale identity.

On the maturity grid in \eqref{eq:grid}, the main simulation uses
\[
N_{\mathrm{MC}}=300{,}000,
\qquad\varepsilon=0.02,
\qquad\Delta=1/504.
\]
The simulated terminal states satisfy the martingale identity
\[
\mathbb E[e^{-rT}S_T]=S_0
\]
within Monte Carlo error, with maximum absolute standardized discrepancy \(0.856\). The sample mean of \(\lambda_T\) is also consistent with the finite-horizon mean formula in \eqref{eq:first_moment_m}, with maximum standardized discrepancy \(0.524\). Moreover, the empirical characteristic function
\[
\widehat\Phi_{\mathrm{MC}}(u;T)=\frac1{N_{\mathrm{MC}}}\sum_{j=1}^{N_{\mathrm{MC}}}e^{iuX_T^{(j)}}
\]
agrees with the Riccati characteristic function on the nonzero frequency grid in \eqref{eq:u_grid}, with a maximum standardized discrepancy of \(1.28\).

For option prices, we use the discounted stock price \(e^{-rT}S_T\) as a control variate. Table~\ref{tab:mc_validation} compares the COS prices with the resulting Monte Carlo control-variate (MC-CV) estimates. In all cases, the COS price lies within the corresponding Monte Carlo confidence interval, whose half-width is reported as \(1.96\,\mathrm{standard\,errors\,(SE)}\).

\begin{table}[htbp]
\centering
\caption{Monte Carlo validation of COS prices.}
\label{tab:mc_validation}
\begin{tabular}{rrrrr}
\toprule
\(k\) & \(T\) & \(C^{\mathrm{COS}}\) &
\(C^{\mathrm{MC-CV}}\) & \(1.96\,\mathrm{SE}\)\\
\midrule
\(-0.30\) & \(1/12\) & \(26.143560\) & \(26.143724\) & \(0.004433\) \\
\( 0.00\) & \(1/12\) & \( 3.316852\) & \( 3.318835\) & \(0.012390\) \\
\( 0.30\) & \(1/12\) & \( 0.061762\) & \( 0.060264\) & \(0.004895\) \\
\( 0.00\) & \(1/4\)  & \( 6.244772\) & \( 6.232020\) & \(0.019786\) \\
\( 0.00\) & \(1/2\)  & \( 9.189160\) & \( 9.204432\) & \(0.027097\) \\
\( 0.30\) & \(1\)    & \( 3.727424\) & \( 3.726130\) & \(0.036096\) \\
\bottomrule
\end{tabular}
\end{table}

\section{Conclusions and Future Challenges} \label{sec:future_challenge}
The model developed in this work is positioned between infinite-activity return models and event-based self-exciting jump models. As summarized in Table~\ref{tab:model_positioning}, its main distinction is to combine infinite jump activity with an activity scale that is endogenous to realized price jumps. This positioning preserves affine tractability, but also points to two natural extensions: more flexible excitation mechanisms and applications beyond vanilla option pricing.

\begin{table}[t]
\centering
\footnotesize
\setlength{\tabcolsep}{2pt}
\renewcommand{\arraystretch}{1.16}
\caption{Model positioning}
\label{tab:model_positioning}
\begin{tabularx}{\textwidth}{@{}L{0.33\textwidth}Y C{0.15\textwidth} C{0.13\textwidth}@{}}
\toprule
Model class
& Activity driver
& Endogeneity
& Jump activity\\
\midrule
Stochastic-clock CGMY/time-changed L\'evy models~\cite{carr2002fine}
& Independent or separately specified activity clock
& Exogenous
& Infinite\\

Affine jump models with stochastic intensity~\cite{duffie2000transform}
& Exogenous factor stochastic differential equation (SDE)
& Exogenous
& Finite\\

Event-based Hawkes models~\cite{errais2010affine,ait2015modeling}
& Event counts
& Endogenous
& Finite\\

\addlinespace[0.2em]
\textbf{This work}
& Own realized price jumps through a bounded quadratic-variation-type response
& \textbf{Endogenous}
& \textbf{Infinite}\\
\bottomrule
\end{tabularx}
\end{table}

The first extension is asymmetric excitation. The excitation kernel
\[
g(y)=1-e^{-ay^2}
\]
depends on the jump size only through \(y^2\). The model therefore contains no direct activity-leverage channel: conditional on jump magnitude, positive and negative jumps generate the same increase in future activity. Any stronger future activity or volatility response associated with downside moves can only enter indirectly through the asymmetry of the jump-size law, for example through \(G<M\), which gives the negative tail slower exponential tempering. Allowing \(g\) to assign larger excitation to negative jumps would separate dynamic leverage from the static asymmetry of the jump-size law. Such an extension would preserve the affine and Riccati structure as long as the asymmetric kernel remains bounded and satisfies
\[
\int_{\mathbb R_0} g(y)\,\bar\nu(dy;\vartheta)<\infty .
\]
The main difficulty would therefore be empirical rather than analytical: the maturity structure of option-implied skew, realized semivariances, variance-swap data, and volatility-index data may all be needed to disentangle jump-size asymmetry from asymmetric activity feedback.

The second extension is the pricing of variance and volatility derivatives. The normalization used in this work makes \(\lambda_t\) the predictable rate of jump-induced quadratic variation. Thus realized-variance payoffs are directly linked to the integrated activity \[ \int_t^T \lambda_s\,ds, \] in addition to the continuous Brownian contribution \(\sigma^2(T-t)\). This suggests a direct route to variance-swap rates, volatility-index-type quantities, and options on realized variance or volatility. A full treatment would require transforms or stable numerical schemes for integrated activity under endogenous feedback. We leave these extensions to future work.

\begin{appendices}
\renewcommand{\thesection}{\arabic{section}}
\section{}\label{app:Levy_shape_results}
\textbf{Proof of Theorem~\ref{thm:normalized-shape-admissibility}:} Near zero, the exponential tempering terms satisfy
\[
e^{-My}\to 1,
\qquad
e^{-G|y|}\to 1.
\]
And the density of \(\bar\nu\) behaves like \(|y|^{-1-\alpha}dy\). Therefore,
\[
\int_{|y|<1}y^2\bar\nu(dy;\vartheta)\asymp\int_0^1 y^{1-\alpha}dy<\infty
\]
because \(\alpha<2\). Away from zero, the exponential factors \(e^{-My}\) and \(e^{-G|y|}\) ensure integrability. Thus
\[
\int_{\mathbb R_0}(1\wedge y^2)\bar\nu(dy;\vartheta)<\infty,
\]
which means \(\bar\nu(dy;\vartheta)\) is a valid L\'evy measure. Moreover,
\[
\bar\nu(|y|<1)\asymp\int_0^1 y^{-1-\alpha}dy=\infty,
\qquad\alpha>0.
\]
Hence the jump component has infinite activity.

The finite-variation condition for the small jumps is
\[
\int_{|y|<1}|y|\bar\nu(dy;\vartheta)<\infty.
\]
Using the same near-zero behavior, we obtain
\[
\int_{|y|<1}|y|\bar\nu(dy;\vartheta)\asymp\int_0^1 y^{-\alpha}dy.
\]
This integral is finite if and only if \(\alpha<1\). Hence the small jumps have finite path variation exactly when \(\alpha<1\).

For \(n\ge2\), the positive-side contribution is
\[
\int_{y>0}y^n\bar\nu(dy;\vartheta)=p\frac{M^{2-\alpha}}{\Gamma(2-\alpha)}\int_0^\infty y^{n-1-\alpha}e^{-My}dy.
\]
By
\[
\int_0^\infty y^{n-\alpha-1}e^{-My}dy=\Gamma(n-\alpha)M^{\alpha-n},
\]
we obtain
\[
\int_{y>0}y^n\bar\nu(dy;\vartheta)=p\frac{\Gamma(n-\alpha)}{\Gamma(2-\alpha)}M^{2-n}.
\]
Similarly, with \(z=|y|=-y\), the negative-side contribution is
\[
\int_{y<0}y^n\bar\nu(dy;\vartheta)=(-1)^n(1-p)\frac{\Gamma(n-\alpha)}{\Gamma(2-\alpha)}G^{2-n}.
\]
Therefore,
\[
c_n(\vartheta):=\int_{\mathbb R_0}y^n\bar\nu(dy;\vartheta)=\frac{\Gamma(n-\alpha)}{\Gamma(2-\alpha)}\left[pM^{2-n}+(-1)^n(1-p)G^{2-n}\right]<\infty.
\]
Taking \(n=2\) gives
\[
c_2(\vartheta)=\int_{\mathbb R_0}y^2\bar\nu(dy;\vartheta)=p+(1-p)=1.
\]
In particular,
\[
\int_{y>0}y^2\bar\nu(dy;\vartheta)=p,
\qquad
\int_{y<0}y^2\bar\nu(dy;\vartheta)=1-p.
\]

\section{}\label{app:well_posed_activity}
\textbf{Proof of Proposition~\ref{prop:well_posed_activity}:} The activity equation can be written as the one-dimensional Poisson-random-measure equation
\[
\lambda_t = \lambda_0 + \int_0^t \kappa(\bar\lambda-\lambda_s)\,ds + \int_0^t\int_{\mathbb R_0}\int_0^\infty \eta g(y)\mathbf 1_{\{\gamma\le \lambda_{s-}\}} \,\mathcal N(ds,dy,d\gamma).
\]
This is a one-dimensional jump-type stochastic equation of the general form studied in \cite[Section~5]{li2012strong}, with no Brownian
coefficient and no compensated jump coefficient. In their notation, we take
\[
U_1=\mathbb R_0\times[0,\infty),
\qquad\mu_1(dy,d\gamma)=\bar\nu(dy;\vartheta)d\gamma,
\]
and set
\[
\sigma\equiv0,
\qquad
g_0\equiv0,
\qquad
g_1(x;y,\gamma)=\eta g(y)\mathbf 1_{\{\gamma\le x\}},
\quad x\ge0.
\]
The drift item is \(x\mapsto\kappa(\bar\lambda-x)\), which admits the
decomposition
\[
\kappa(\bar\lambda-x)=\kappa\bar\lambda-\kappa x.
\]
At the same time, \(\kappa x\) is continuous and non-decreasing. Moreover, for \(x,x'\ge0\),
\[
\begin{aligned}
\int_{U_1}
|g_1(x;u)-g_1(x';u)|\,\mu_1(du)
&=
\eta
\int_{\mathbb R_0}g(y)\bar\nu(dy;\vartheta)
\int_0^\infty
\left|
\mathbf 1_{\{\gamma\le x\}}
-
\mathbf 1_{\{\gamma\le x'\}}
\right|d\gamma  \\
&=
\eta\bar g |x-x'|.
\end{aligned}
\]
Thus the non-compensated jump coefficient satisfies the required integrated Lipschitz condition. It also satisfies the corresponding linear-growth bound, since
\[
\int_{U_1}|g_1(x;u)|\,\mu_1(du)
=
\eta\bar g x,
\qquad x\ge0.
\]
Therefore the standard strong-existence and pathwise-uniqueness theorem for
jump-type stochastic equations applies.

It remains to prove the stated finite-horizon integrability. For \(n\ge1\), define
\[
\rho_n
:=
\inf\left\{
t\ge0:
\int_0^t\lambda_s\,ds\ge n
\right\},
\]
with the convention \(\inf\varnothing=\infty\). On the stopped interval
\([0,t\wedge\rho_n]\), the compensation formula is legitimate because
\[
\int_0^{t\wedge\rho_n}\int_{\mathbb R_0}\int_0^\infty
g(y)\mathbf 1_{\{\gamma\le\lambda_{s-}\}}
\,d\gamma\,\bar\nu(dy;\vartheta)\,ds
=
\bar g\int_0^{t\wedge\rho_n}\lambda_s\,ds
\le \bar g n .
\]
Taking expectations in the stopped equation gives
\[
\mathbb E[\lambda_{t\wedge\rho_n}]
=
\lambda_0
+
\kappa\bar\lambda\,\mathbb E[t\wedge\rho_n]
-
\kappa\mathbb E\!\left[\int_0^{t\wedge\rho_n}\lambda_s\,ds\right]
+
\eta\bar g\,
\mathbb E\!\left[\int_0^{t\wedge\rho_n}\lambda_s\,ds\right].
\]
Therefore, for \(0\le t\le T\),
\[
\mathbb E[\lambda_{t\wedge\rho_n}]
\le
\lambda_0+\kappa\bar\lambda T
+
\eta\bar g
\int_0^t
\mathbb E[\lambda_{s\wedge\rho_n}]\,ds .
\]
By Gronwall's inequality,
\[
\sup_{0\le t\le T}\mathbb E[\lambda_{t\wedge\rho_n}]
\le
(\lambda_0+\kappa\bar\lambda T)\exp(\eta\bar g T)
=:C_T,
\]
where \(C_T\) is independent of \(n\). Hence
\[
\mathbb E\!\left[
\int_0^{T\wedge\rho_n}\lambda_s\,ds
\right]
=
\int_0^T
\mathbb E\!\left[
\mathbf 1_{\{s\le\rho_n\}}\lambda_s
\right]ds
\le
\int_0^T
\mathbb E[\lambda_{s\wedge\rho_n}]\,ds
\le
TC_T .
\]
Letting \(n\to\infty\) and using monotone convergence yields
\[
\mathbb E\!\left[\int_0^T\lambda_s\,ds\right]<\infty .
\]
Since the random variable \(\int_0^T\lambda_s\,ds\) is nonnegative, finite expectation also implies
\[
\int_0^T\lambda_s\,ds<\infty
\qquad\text{a.s.}
\]
This proves the claim.

\section{}\label{app:proof_stationarity_geometric_ergodicity}
\textbf{Proof of Proposition~\ref{prop:stationary_activity}:} Since the jump measure \(\mu\) has predictable compensator
\(\lambda_{t-}\bar\nu(dy;\vartheta)\,dt\), the activity process is a
time-homogeneous Markov process on \([0,\infty)\). For
\(V\in C^1([0,\infty))\) such that the following integral is finite, its
extended generator is
\[
(\mathcal A V)(\lambda)=\kappa(\bar\lambda-\lambda)V'(\lambda)+\lambda\int_{\mathbb R_0}\bigl[V(\lambda+\eta g(y))-V(\lambda)\bigr]\bar\nu(dy;\vartheta).
\]
For Lipschitz \(V\), the integral is finite because
\[
\bigl|V(\lambda+\eta g(y))-V(\lambda)\bigr|\le\operatorname{Lip}(V)\eta g(y)\qquad\text{and}\qquad\bar g<\infty .
\]

We first verify a geometric Foster--Lyapunov drift condition. Let
\[
\mathcal W(\lambda)=1+\lambda .
\]
Then
\[
(\mathcal A\mathcal W)(\lambda)
=
\kappa(\bar\lambda-\lambda)+\lambda\eta\bar g
=
-(\kappa-\eta\bar g)\mathcal W(\lambda)
+\kappa\bar\lambda+(\kappa-\eta\bar g).
\]
Hence, for any \(\mathfrak b\in(0,\kappa-\eta\bar g)\), there exist
\(\mathfrak a<\infty\) and a compact set \(D\subset[0,\infty)\) such that
\[
\mathcal A\mathcal W(\lambda)
\le
-\mathfrak b\mathcal W(\lambda)
+
\mathfrak a\,\mathbf 1_D(\lambda),
\qquad
\lambda\ge0 .
\]
Thus \(\mathcal W\) is a norm-like Lyapunov function satisfying a geometric drift condition.

Because \(\eta g(y)\ge0\), variation of constants gives
\[
\lambda_t=\bar\lambda+(\lambda_0-\bar\lambda)e^{-\kappa t}+\eta\int_0^t e^{-\kappa(t-s)}\int_{\mathbb R_0}g(y)\,\mu(ds,dy)\ge\bar\lambda+(\lambda_0-\bar\lambda)e^{-\kappa t}.
\]
Hence \(\bar\lambda\) is an attracting lower barrier; in particular, once \(\lambda_0\ge\bar\lambda\), the path remains in \([\bar\lambda,\infty)\), and the recurrent state space supporting \(\pi_\lambda\) is \([\bar\lambda,\infty)\). On \((\bar\lambda,\infty)\), the jump kernel maps \(y\) into the positive
increment \(\eta g(y)\). Since
\[
x\mapsto\eta g(x)
\]
is \(C^1\) and strictly increasing on \((0,\infty)\), and since \(\bar\nu\) has positive densities on the half-lines, the induced activity-jump measure has an absolutely continuous component on \((0,\eta)\). Together with the deterministic flow
\[
\lambda\mapsto\bar\lambda+(\lambda-\bar\lambda)e^{-\kappa t},
\]
this yields \(\psi\)-irreducibility on \((\bar\lambda,\infty)\), aperiodicity, and petite compact sets.

The drift inequality above then gives positive Harris recurrence and a unique invariant probability \(\pi_\lambda\) by \cite[Theorem~4.2]{meyn1993stability}. Combined with the preceding irreducibility, aperiodicity, and petite-set properties, it also yields \(\mathcal W\)-uniform geometric ergodicity by \cite[Theorems~5.1--5.2]{down1995exponential}. Therefore, for some \(\mathfrak M<\infty\) and \(\mathfrak C>0\),
\[
\left\|\mathcal L(\lambda_t\mid\lambda_0)-\pi_\lambda\right\|_{\mathrm{TV}}\le\mathfrak M\mathcal W(\lambda_0)e^{-\mathfrak C t}=\mathfrak M(1+\lambda_0)e^{-\mathfrak C t},
\qquad t\ge0 .
\]

It remains to identify the stationary mean. Applying the generator to
\(V(\lambda)=\lambda\) gives
\[
\frac{d}{dt}\mathbb E[\lambda_t\mid\lambda_0]=\kappa\bar\lambda-(\kappa-\eta\bar g)\mathbb E[\lambda_t\mid\lambda_0].
\]
Therefore
\[
\mathbb E[\lambda_t\mid\lambda_0]=e^{-(\kappa-\eta\bar g)t}\lambda_0+\frac{\kappa\bar\lambda}{\kappa-\eta\bar g}\left(1-e^{-(\kappa-\eta\bar g)t}\right),
\]
which is precisely \eqref{eq:first_moment_m}. Since
\(\mathcal W\)-uniform ergodicity implies
\[
\int_0^\infty \mathcal W(\lambda)\,\pi_\lambda(d\lambda)<\infty ,
\]
the invariant distribution has finite first moment. Taking
\(\lambda_0\sim\pi_\lambda\) in the preceding mean identity and using
stationarity yields, for every \(t>0\),
\[
\int_0^\infty \lambda\,\pi_\lambda(d\lambda)=e^{-(\kappa-\eta\bar g)t}\int_0^\infty \lambda\,\pi_\lambda(d\lambda)+\frac{\kappa\bar\lambda}{\kappa-\eta\bar g}\left(1-e^{-(\kappa-\eta\bar g)t}\right).
\]
Since \(1-e^{-(\kappa-\eta\bar g)t}>0\), it follows that
\[
\int_0^\infty \lambda\,\pi_\lambda(d\lambda)=\frac{\kappa\bar\lambda}{\kappa-\eta\bar g}.
\]
This coincides with the limiting value of the finite-horizon mean in
\eqref{eq:first_moment_m}.

\section{}\label{app:proof_real_axis_riccati}
\textbf{Proof of Theorem~\ref{thm:real_axis_riccati}:} For fixed \(u\in\mathbb R\) and \(z\in\mathbb C_-\), set
\[
\mathcal J_u(z):=\int_{\mathbb R_0}\left[\exp\left(iuy+\eta z g(y)\right)-1-iuy\mathbf 1_{\{|y|<1\}}\right]\bar\nu(dy;\vartheta),
\]
whenever the integral is finite. We first verify that this integral is indeed well defined on \(\mathbb C_-\).

At \(z=0\), the finiteness follows by the same small- and large-jump argument as in Lemma~\ref{lem:jump_martingale_correction_finite}. Now let \(z_1,z_2\in\mathbb C_-\). Since \(\mathbb C_-\) is convex, the line segment joining \(z_1\) and \(z_2\) remains in \(\mathbb C_-\). Using the integral mean-value identity for the complex exponential, we obtain
\[
\begin{aligned}
\left| e^{\eta z_1g(y)} - e^{\eta z_2g(y)} \right| &\le \eta |z_1-z_2|g(y) \sup_{\theta\in[0,1]} \left| \exp\left( \eta\bigl(\theta z_1+(1-\theta)z_2\bigr)g(y) \right) \right| \\ &\le \eta |z_1-z_2|g(y). \end{aligned}
\]
Therefore,
\[
|\mathcal J_u(z_1)-\mathcal J_u(z_2)| \le \eta\bar g\,|z_1-z_2|.
\]
In particular, taking \(z_2=0\) shows that \(\mathcal J_u(z)\) is well defined for every \(z\in\mathbb C_-\). By \eqref{eq:K_1},
we obtain
\[
|\mathcal K_1(u,z_1)-\mathcal K_1(u,z_2)| \le (\kappa+\eta\bar g)|z_1-z_2|, \qquad z_1,z_2\in\mathbb C_-.
\]
Moreover,
\[
|\mathcal K_1(u,z)| \le |\mathcal K_1(u,0)| + (\kappa+\eta\bar g)|z| \le C_0+C_1|z|,
\qquad z\in\mathbb C_-
\]
for finite constants \(C_0, C_1\) depending on \(u\) and the model parameters.

Since \(\mathcal K_1(u,\cdot)\) is Lipschitz on \(\mathbb C_-\), the Riccati equation in \eqref{eq:riccati_psi}, viewed as a two-dimensional real ordinary differential equation for \((\operatorname{Re}\psi,\operatorname{Im}\psi)\), admits a unique local solution by the standard ODE existence and uniqueness theorem \cite[Chapter~1]{coddington1955theory}. The linear-growth bound will be used below, together with the forward invariance of \(\mathbb C_-\), to rule out finite-time explosion. We write
\[
\psi(\tau;u,v)=\xi(\tau)+i\zeta(\tau),
\qquad
\xi(\tau)=\operatorname{Re}\psi(\tau;u,v).
\]
Taking real parts in the Riccati equation \eqref{eq:generalized_Riccati_system}, and noting that
\(-iu\chi_J\) and
\(-iuy\mathbf 1_{\{|y|<1\}}\) are purely imaginary, gives
\[
\xi'(\tau)
=
-\kappa\xi(\tau)
+
\int_{\mathbb R_0}
\left[
e^{\eta \xi(\tau)g(y)}
\cos\left(uy+\eta\zeta(\tau)g(y)\right)
-
1
\right]
\bar\nu(dy;\vartheta).
\]
We now verify the sub-tangentiality condition on the boundary of the closed
half-plane
\[
\mathbb C_-=\{z\in\mathbb C:\operatorname{Re}z\le0\}.
\]
On the boundary \(\partial\mathbb C_-\), where \(\xi(\tau)=0\), the real-part
equation reduces to
\[
\xi'(\tau)
=
\int_{\mathbb R_0}
\left[
\cos\left(uy+\eta\zeta(\tau)g(y)\right)-1
\right]
\bar\nu(dy;\vartheta)
\le0,
\]
because the cosine term is bounded above by one.
Equivalently, at each boundary point of \(\mathbb C_-\), the Riccati vector
field belongs to the tangent cone of \(\mathbb C_-\). Thus the vector field
satisfies the Nagumo sub-tangentiality condition on \(\partial\mathbb C_-\).
By the tangent-cone form of Nagumo's positive-invariance criterion \cite[Theorem~3.1]{blanchini1999set}, the closed half-plane \(\mathbb C_-\) is
forward invariant for the Riccati equation. Since the initial value satisfies
\[
v\in\mathbb C_-,
\]
we conclude that
\[
\psi(\tau;u,v)\in\mathbb C_-
\]
for all times at which the Riccati solution exists.

It remains to exclude finite-time explosion of the Riccati solution. Fix an
arbitrary finite horizon \(\bar T<\infty\). On every interval on which the local
solution is defined, the forward-invariance argument above gives
\[
\psi(\tau;u,v)\in\mathbb C_-.
\]
Hence the linear-growth estimate for the Riccati vector field on \(\mathbb C_-\)
implies
\[
|\psi'(\tau;u,v)|
\le
C_0+C_1|\psi(\tau;u,v)|.
\]
Consequently, for all such \(\tau\le \bar T\),
\[
|\psi(\tau;u,v)|
\le
|v|
+
\int_0^\tau
\left(
C_0+C_1|\psi(s;u,v)|
\right)ds.
\]
By Gronwall's inequality,
\[
|\psi(\tau;u,v)|
\le
\left(
|v|+\frac{C_0}{C_1}
\right)e^{C_1\tau}
-
\frac{C_0}{C_1},
\qquad 0\le \tau\le \bar T
\]
with the evident modification when \(C_1=0\). Thus \(\psi\) remains bounded on every finite horizon. The standard continuation theorem for ODEs then rules out finite-time explosion. Since \(\bar T<\infty\) was arbitrary, the Riccati solution exists globally on every finite horizon.

Now define
\[
\phi(\tau;u,v)=\int_0^\tau\mathcal K_0\bigl(u,\psi(s;u,v)\bigr)\,ds.
\]
Since
\[
\mathcal K_0(u,z)=iu\left(r-\frac12\sigma^2\right)-\frac12\sigma^2u^2+\kappa\bar\lambda z
\]
is affine in \(z\), and \(\psi(\cdot;u,v)\) is finite on finite horizons, the integral defining \(\phi\) is finite on every finite horizon. Hence
\[
\phi'(\tau;u,v)=\mathcal K_0\bigl(u,\psi(\tau;u,v)\bigr),
\qquad
\phi(0;u,v)=0.
\]

For \(s\in[t,T]\), define
\[
\mathcal M_s:=f_{T-s}(X_s,\lambda_s)=\exp\left(iuX_s+\phi(T-s;u,v)+\psi(T-s;u,v)\lambda_s\right).
\]
Since \(u\in\mathbb R\), \(\lambda_s\ge0\) by Proposition~\ref{prop:well_posed_activity} and
\[
\operatorname{Re}\psi(T-s;u,v)\le0,
\]
we have
\[
|\mathcal M_s|\le\exp\left(\sup_{0\le q\le T-t}\operatorname{Re}\phi(q;u,v)\right)<\infty.
\]
Thus \(\mathcal M\) is bounded on \([t,T]\).

We now identify its drift. Since \(q=T-s\), the Riccati equations give
\[
\frac{\partial_s f_{T-s}(x,\ell)}{f_{T-s}(x,\ell)}
=
-\mathcal K_0\bigl(u,\psi(T-s;u,v)\bigr)
-
\ell\,\mathcal K_1\bigl(u,\psi(T-s;u,v)\bigr).
\]
On the other hand, the generator calculation yields
\[
\frac{\mathcal A f_{T-s}(x,\ell)}{f_{T-s}(x,\ell)}
=
\mathcal K_0\bigl(u,\psi(T-s;u,v)\bigr)
+
\ell\,\mathcal K_1\bigl(u,\psi(T-s;u,v)\bigr).
\]
Therefore,
\[
\left(\partial_s+\mathcal A\right)f_{T-s}(x,\ell)=0.
\]

Applying It\^o's formula for jump semimartingales to
\(f_{T-s}(X_s,\lambda_s)\) shows that \(\mathcal M\) is a local martingale.
Since \(\mathcal M\) is bounded on \([t,T]\), it is a true martingale. Hence
\[
\mathcal M_t
=
\mathbb E_t[\mathcal M_T].
\]
At \(s=T\), we have
\[
\phi(0;u,v)=0,
\qquad
\psi(0;u,v)=v,
\]
and therefore
\[
\mathcal M_T
=
\exp\left(iuX_T+v\lambda_T\right).
\]
Consequently,
\[
\mathbb E_t
\left[
\exp\left(iuX_T+v\lambda_T\right)
\right]
=
\exp\left(
iuX_t+\phi(T-t;u,v)+\psi(T-t;u,v)\lambda_t
\right).
\]

\section{}\label{app:true_martingale_sufficient_condition}
\textbf{Proof of Theorem~\ref{thm:true_martingale_sufficient_condition}:} The finiteness of \(H_J<\infty\) follows by the same estimates used for Lemma~\ref{lem:jump_martingale_correction_finite}: near zero the integrand is \(O(y^2)\), while the positive tail is controlled by the exponential tempering under \(M>1\), and the negative tail is controlled by the L\'evy-measure property.

By Proposition~\ref{prop:risk_neutral_drift_restriction}, the discounted stock price is a nonnegative local martingale. Equivalently, the normalized discounted stock price admits the stochastic-exponential representation
\[
\frac{e^{-rt}S_t}{S_0}
=
\mathcal E(\mathcal Y)_t,
\]
where \(\mathcal Y\) is the stochastic logarithm and decomposes as
\[
\mathcal Y_t=\mathcal Y_t^c+\mathcal Y_t^d.
\]
The continuous local-martingale part is \(\mathcal Y_t^c=\sigma W_t\), and the purely discontinuous local-martingale part is
\[
\mathcal Y_t^d=\int_0^t\int_{\mathbb R_0}\left(e^y-1\right)\left(\mu(ds,dy)-\lambda_{s-}\bar\nu(dy;\vartheta)\,ds\right).
\]
This representation is the standard semimartingale form in which exponential-martingale criteria are applied; see \cite[Section~2]{kallsen2002cumulant} for the stochastic-logarithm and exponential-compensator framework.

At a jump time of \(\mathcal Y\) with log-price jump size \(y\), we have
\[
\Delta \mathcal Y=e^y-1.
\]
Therefore,
\[
(1+\Delta\mathcal Y)\log(1+\Delta\mathcal Y)-\Delta\mathcal Y=e^y y-e^y+1.
\]
This is the jump-entropy term appearing in the L\'epingle--M\'emin exponential-martingale criterion \cite[Th\'eor\`eme~III.1]{lepingle1978integrabilite}. We apply this criterion to the stopped local martingale
\[
\mathcal Y^T_s:=\mathcal Y_{s\wedge T},
\]
which reduces to a finite-horizon condition. For the Dol\'eans exponential \(\mathcal E(\mathcal Y^T)\), the L\'epingle--M\'emin criterion involves the predictable compensator of
\[
\frac12\langle\mathcal Y^c,\mathcal Y^c\rangle_t+\sum_{s\le t}\left[(1+\Delta\mathcal Y_s)\log(1+\Delta\mathcal Y_s)-\Delta\mathcal Y_s\right],
\qquad 0\le t\le T.
\]
Since
\[
\mathcal Y_t^c=\sigma W_t,
\qquad
\langle\mathcal Y^c,\mathcal Y^c\rangle_t=\sigma^2t,
\]
the continuous contribution on \([0,T]\) is
\[
\frac12\langle\mathcal Y^c,\mathcal Y^c\rangle_T=\frac12\sigma^2T.
\]
Moreover, the predictable compensator of the jump contribution is
\[
\int_0^T
\lambda_s
\int_{\mathbb R_0}
\left(
y e^y-e^y+1
\right)
\bar\nu(dy;\vartheta)\,ds
=
H_J\int_0^T\lambda_s\,ds.
\]
Hence the L\'epingle--M\'emin criterion applies if
\[
\mathbb E\left[\exp\left(\frac12\sigma^2T+H_J\int_0^T\lambda_s\,ds\right)\right]<\infty.
\]
Since \(e^{\sigma^2T/2}\) is a finite deterministic factor, this condition follows from
\[
\mathbb E\left[\exp\left(H_J\int_0^T\lambda_s\,ds\right)\right]<\infty.
\]
Consequently, \(\mathcal E(\mathcal Y)\) is a true martingale on \([0,T]\). Since
\[
\frac{e^{-rt}S_t}{S_0}=\mathcal E(\mathcal Y)_t,
\]
we obtain, for every \(0\le t\le T\),
\[
\mathbb E_t[e^{-rT}S_T]=e^{-rt}S_t.
\]
Equivalently,
\[
\mathbb E_t[S_T]=e^{r(T-t)}S_t.
\]
Thus the discounted stock price \(e^{-rt}S_t\) is a true martingale on finite horizons.

\section{}\label{app:cos_cumulants}
\textbf{Conditional Cumulants for the COS Truncation Rule:} The conditional cumulants of \(Z_T\) are recovered from the local expansion of the log-characteristic function at the origin:
\[
\log\varphi_t(u;T,K)=\sum_{n=1}^4\frac{(iu)^n}{n!}\mathfrak c_n^Z+
o(u^4),
\qquad u\to0.
\]

By \eqref{eq:characteristic_function_of_payoff} and \eqref{eq:characteristic_function_of_log-moneyness}, we obtain the local log-characteristic function
\[
\log\varphi_t(u;T,K)=iu(X_t-\log K)+\phi(\tau;u,0)+\psi(\tau;u,0)\lambda_t,
\]
where the logarithm is understood locally around \(u=0\). Define the real-axis Riccati derivatives, for the finite orders considered below, by
\[
a_n(s):=\frac{1}{i^n}\left.\frac{\partial^n}{\partial u^n}\phi(s;u,0)\right|_{u=0},
\qquad
b_n(s):=\frac{1}{i^n}\left.\frac{\partial^n}{\partial u^n}\psi(s;u,0)\right|_{u=0}.
\]
Then, for \(\tau=T-t\),
\[
\mathfrak c_n^Z=\mathbf 1_{\{n=1\}}(X_t-\log K)+a_n(\tau)+\lambda_t b_n(\tau),
\qquad n\geq1.
\]

We now derive the derivative system for \(a_n\) and \(b_n\). For \(v=0\), the Riccati equations are
\[
\partial_s\phi(s;u,0)=iu\left(r-\frac12\sigma^2\right)-\frac12\sigma^2u^2+\kappa\bar\lambda\,\psi(s;u,0)
\]
and
\begin{align*}
\partial_s\psi(s;u,0)=
&-\kappa\psi(s;u,0)-iu\chi_J\\
&+\int_{\mathbb R_0}\left(e^{iuy+\eta\psi(s;u,0)g(y)}-1-iuy\mathbf 1_{\{|y|<1\}}\right)\bar\nu(dy;\vartheta)
\end{align*}
with
\[
\phi(0;u,0)=0,
\qquad
\psi(0;u,0)=0.
\]
Differentiating the \(\phi\)-equation at \(u=0\) gives
\[
a_1'(s)=r-\frac12\sigma^2+\kappa\bar\lambda\,b_1(s),
\]
\[
a_2'(s)=\sigma^2+\kappa\bar\lambda\,b_2(s),
\]
and, for \(n\ge3\),
\[
a_n'(s)=\kappa\bar\lambda\,b_n(s)
\]
with \(a_n(0)=0\).

It remains to compute the corresponding derivatives of \(\psi\). The first derivative satisfies
\[
b_1'(s)=-(\kappa-\eta\bar g)\,b_1(s)+\int_{|y|\ge1}y\,\bar\nu(dy;\vartheta)-\chi_J, \qquad b_1(0)=0.
\]
Then we obtain
\[
b_1(s)=\frac{\displaystyle\int_{|y|\ge1}y\,\bar\nu(dy;\vartheta)-\chi_J}{\kappa-\eta\bar g}\left(1-e^{-(\kappa-\eta\bar g)\cdot s}\right).
\]
Therefore
\[
\mathfrak c_1^Z=X_t-\log K+a_1(\tau)+\lambda_t b_1(\tau),
\]
where
\[
a_1(\tau)=\left(r-\frac12\sigma^2\right)\tau+\kappa\bar\lambda\int_0^\tau b_1(s)\,ds.
\]

For higher-order derivatives, set
\[
\tilde{y}=\tilde{y}(y,s):=y+\eta b_1(s)g(y).
\]
The second derivative satisfies
\[
b_2'(s)=-(\kappa-\eta\bar g)\,b_2(s)+\int_{\mathbb R_0}\tilde{y}^2\bar\nu(dy;\vartheta),
\qquad b_2(0)=0.
\]
Consequently,
\[
\mathfrak c_2^Z=a_2(\tau)+\lambda_t b_2(\tau),
\]
where
\[
a_2(\tau)=\sigma^2\tau+\kappa\bar\lambda\int_0^\tau b_2(s)\,ds.
\]

The fourth cumulant depends on the third Riccati derivative, so we also record the equation for \(b_3\):
\[
b_3'(s)=-(\kappa-\eta\bar g)\,b_3(s)+\int_{\mathbb R_0}\left[\tilde{y}^3+3\eta\,\tilde{y}\,b_2(s)\,g(y)\right]\bar\nu(dy;\vartheta)
\]
with \(b_3(0)=0\). Then the fourth derivative satisfies
\[
\begin{aligned}
b_4'(s)
&=-(\kappa-\eta\bar g)\,b_4(s)+\int_{\mathbb R_0}\Big[\tilde{y}^4+6\eta\,\tilde{y}^2\,b_2(s)\,g(y)\\
&+3\big(\eta\,b_2(s)\,g(y)\big)^2+4\eta\,\tilde{y}\,b_3(s)\,g(y)\Big]\bar\nu(dy;\vartheta)
\end{aligned}
\]
with \(b_4(0)=0\). Therefore
\[
\mathfrak c_4^Z
=
a_4(\tau)+\lambda_t b_4(\tau),
\qquad
a_4(\tau)
=
\kappa\bar\lambda\int_0^\tau b_4(s)\,ds.
\]

Thus, the cumulants required by the COS truncation rule, \[ \mathfrak c_1^Z, \qquad \mathfrak c_2^Z, \qquad \mathfrak c_4^Z, \] are obtained from the real-axis Riccati derivative system above. The required jump-size and mixed jump-feedback integrals are finite under Theorem~\ref{thm:normalized-shape-admissibility}, the bounded-feedback specification \(g(y)=1-e^{-ay^2}\), and the finite-horizon admissibility of the activity scale. This computation uses only derivatives at \(u=0\) along the real Fourier axis and does not impose an additional complex-transform admissibility condition.

\section{}\label{app:Carr_Madan}
\textbf{Proof of Proposition~\ref{prop:fourier_call_pricing}:} By \eqref{eq:European_valuation}, the European call price is
\[
C_t(K,T)=e^{-r\tau}\mathbb E_t\left[(e^{X_T}-K)^+\right].
\]
For \(\delta>0\), define the damped call-price function as a function of the log-strike variable \(h\in\mathbb R\) by
\[
c_\delta(h):=e^{\delta h}C_t(e^h,T).
\]
We use the Fourier-transform convention
\[
\widehat c_\delta(\omega):=\int_{\mathbb R}e^{i\omega h}c_\delta(h)\,dh,
\qquad
c_\delta(h)=\frac{1}{2\pi}\int_{\mathbb R}e^{-i\omega h}\widehat c_\delta(\omega)\,d\omega .
\]
Under Assumption~\ref{ass:carr_madan_benchmark_admissibility}, the damped call-price function and the corresponding Fourier integrand are integrable, so that Fubini's theorem and Fourier inversion are applicable. Hence
\[
\begin{aligned}
\widehat c_\delta(\omega)
&=\int_{\mathbb R}e^{i\omega h}e^{\delta h}e^{-r\tau}\mathbb E_t\left[(e^{X_T}-e^h)^+\right]dh  \\
&=e^{-r\tau}\mathbb E_t\left[\int_{\mathbb R}e^{(i\omega+\delta)h}(e^{X_T}-e^h)^+\,dh\right].
\end{aligned}
\]
For a fixed realization \(X_T=x\), the payoff is positive only for \(h<x\).
Therefore,
\[
\begin{aligned}
\int_{\mathbb R}e^{(i\omega+\delta)h}(e^x-e^h)^+\,dh
&=\int_{-\infty}^{x}e^{(i\omega+\delta)h}(e^x-e^h)\,dh\\
&=e^x\int_{-\infty}^{x}e^{(i\omega+\delta)h}\,dh-\int_{-\infty}^{x}e^{(i\omega+\delta+1)h}\,dh.
\end{aligned}
\]
Since \(\delta>0\), both integrals are finite at \(-\infty\). Direct calculation gives
\[
\begin{aligned}
\int_{-\infty}^{x}e^{(i\omega+\delta)h}(e^x-e^h)\,dh
&=e^x\frac{e^{(i\omega+\delta)x}}{\delta+i\omega}-\frac{e^{(i\omega+\delta+1)x}}{\delta+1+i\omega} \\
&=e^{(i\omega+\delta+1)x}\left(\frac{1}{\delta+i\omega}-\frac{1}{\delta+1+i\omega}\right) \\
&=\frac{e^{(i\omega+\delta+1)x}}{(\delta+i\omega)(\delta+1+i\omega)
}.
\end{aligned}
\]
Applying this identity with \(x=X_T\), we obtain
\[
\widehat c_\delta(\omega)=e^{-r\tau}\frac{\mathbb E_t\left[e^{(i\omega+\delta+1)X_T}\right]}{(\delta+i\omega)(\delta+1+i\omega)}.
\]
Since
\[
e^{(i\omega+\delta+1)X_T}=e^{i(\omega-i(\delta+1))X_T},
\]
the numerator is the complex extension of the conditional log-price transform evaluated at
\[
u=\omega-i(\delta+1).
\]
Thus
\[
\mathbb E_t\left[e^{(i\omega+\delta+1)X_T}\right]=\Phi_t\bigl(\omega-i(\delta+1);T\bigr).
\]
Moreover,
\[
(\delta+i\omega)(\delta+1+i\omega)=\delta^2+\delta-\omega^2+i(2\delta+1)\omega .
\]
Hence
\[
\widehat c_\delta(\omega)=e^{-r\tau}\frac{\Phi_t\bigl(\omega-i(\delta+1);T\bigr)}{\delta^2+\delta-\omega^2+i(2\delta+1)\omega}.
\]

By Fourier inversion,
\[
C_t(e^h,T)=e^{-\delta h}c_\delta(h)=\frac{e^{-\delta h}}{2\pi}\int_{-\infty}^{\infty}e^{-i\omega h}\widehat c_\delta(\omega)\,d\omega.
\]
Since \(c_\delta(h)\) is real-valued, its Fourier transform satisfies
\[
\widehat c_\delta(-\omega)=\overline{\widehat c_\delta(\omega)}.
\]
Thus
\[
\frac{1}{2\pi}\int_{-\infty}^{\infty}e^{-i\omega h}\widehat c_\delta(\omega)\,d\omega=\frac{1}{\pi}\int_0^\infty\operatorname{Re}\left[e^{-i\omega h}\widehat c_\delta(\omega)\right]d\omega .
\]
Substituting the expression for \(\widehat c_\delta(\omega)\) yields
\[
C_t(e^h,T)=\frac{e^{-\delta h-r\tau}}{\pi}\int_0^\infty\operatorname{Re}\left[e^{-i\omega h}\frac{\Phi_t\bigl(\omega-i(\delta+1);T\bigr)}{\delta^2+\delta-\omega^2+i(2\delta+1)\omega}\right]d\omega.
\]
Evaluating this identity at \(h=\log K\), and using
\[
e^{-\delta\log K}=K^{-\delta},
\qquad
e^{-i\omega\log K}=K^{-i\omega},
\]
we obtain
\[
C_t(K,T)=\frac{K^{-\delta}e^{-r\tau}}{\pi}\int_0^\infty\operatorname{Re}\left[K^{-i\omega}\frac{\Phi_t\bigl(\omega-i(\delta+1);T\bigr)}{\delta^2+\delta-\omega^2+i(2\delta+1)\omega}\right]d\omega ,
\]
which proves the claim.

\section{}\label{app:eta_zero_benchmark}
\textbf{No-Feedback Benchmark:} The Riccati equation for \(\psi\) reduces to the linear equation
\[
\psi'(\tau;u,0)=-\kappa\psi(\tau;u,0)-iu\chi_J+\int_{\mathbb R_0}\left[e^{iuy}-1-iuy\mathbf 1_{\{|y|<1\}}\right]\bar\nu(dy;\vartheta).
\]
Hence, we obtain
\[
\psi(\tau;u,0)=\frac{\mathcal K_1(u,0)}{\kappa}\left(1-e^{-\kappa\tau}\right).
\]
The corresponding \(\phi\)-equation is
\[
\phi'(\tau;u,0)=iu\left(r-\frac12\sigma^2\right)-\frac12\sigma^2u^2+\kappa\bar\lambda\,\psi(\tau;u,0).
\]
Therefore,
\[
\phi(\tau;u,0)=\left[iu\left(r-\frac12\sigma^2\right)-\frac12\sigma^2u^2\right]\tau+\bar\lambda \mathcal K_1(u,0)\left[\tau-\frac{1-e^{-\kappa\tau}}{\kappa}\right].
\]
The no-feedback characteristic function is
\[
\Phi_t^{(0)}(u;T)=\exp\left(iuX_t+\phi(\tau;u,0)+\psi(\tau;u,0)\lambda_t\right).
\]
This benchmark isolates the role of the endogenous feedback parameter \(\eta\). When \(\eta=0\), the activity state remains mean-reverting, but it is not excited by realized log-price jumps.
\end{appendices}

\bigskip

\noindent\textbf{Data Availability}

The numerical algorithms and source code that support the findings of this study are available from the corresponding author upon reasonable request.

\bigskip
\noindent\textbf{Acknowledgements}

This work was supported by  Guangdong Basic and Applied Basic Research Foundation (Grant No. 2025A1515012560),  Guangdong Introduction Program (Grant No. 2023QN10X753) and  National Foreign Experts Program (Grant No. 111001819820258003).

\bibliography{ref}

\end{document}